\def\x{{\mathbf x}}
\def\A{{\mathcal A}}
\def\C{{\mathcal C}}
\def\B{{\mathcal B}}
\def\D{{\mathcal D}}
\def\J{{\mathcal J}}
\def\X{{\mathcal X}}
\def\Z{{\mathcal Z}}
\def\U{{\mathcal U}}
\def\V{{\mathcal V}}
\def\S{{\mathcal S}}
\def\Y{{\mathcal Y}}
\def\W{{\mathcal W}}
\newcommand{\norm}[1]{\left\lVert#1\right\rVert}
\newcommand{\R}{\mathbb{R}}
\DeclareMathOperator*{\sign}{sign}
\DeclareMathOperator{\prox}{prox}
\DeclareMathOperator*{\argmin}{argmin}
\newtheorem{theorem}{Theorem}[section]
\newtheorem{lemma}[theorem]{Lemma}
 \newtheorem{definition}[theorem]{Definition}
 \newtheorem{proposition}[theorem]{Proposition}
 \newtheorem{corollary}[theorem]{Corollary}
 \newtheorem{remark}[theorem]{Remark}
\begin{document}

\title{Power of $\ell_1$-Norm Regularized Kaczmarz Algorithms for High-Order Tensor Recovery}
\author{Katherine Henneberger\thanks{Department of Mathematics, University of Kentucky, Lexington, KY 40511}
\and Jing Qin\thanks{Department of Mathematics, University of Kentucky, Lexington, KY 40511}}
\date{}

\maketitle

\abstract{Tensors serve as a crucial tool in the representation and analysis of complex, multi-dimensional data. As data volumes continue to expand, there is an increasing demand for developing optimization algorithms that can directly operate on tensors to deliver fast and effective computations. Many problems in real-world applications can be formulated as the task of recovering high-order tensors characterized by sparse and/or low-rank structures. In this work, we propose  novel Kaczmarz algorithms with a power of the $\ell_1$-norm regularization for reconstructing high-order tensors by exploiting sparsity and/or low-rankness of tensor data. In addition, we develop both a block and an accelerated variant, along with a thorough convergence analysis of these algorithms. A variety of numerical experiments on both synthetic and real-world datasets demonstrate the effectiveness and significant potential of the proposed methods in image and video processing tasks, such as image sequence destriping and video deconvolution.}

\section{Introduction}

The Kaczmarz algorithm \cite{karczmarz1937angenaherte} is an iterative method for solving linear systems of equations, and is particularly beneficial for large and sparse systems. This method estimates a solution to a consistent linear system by iterative projection onto the hyperplane corresponding to each equation. The efficiency of the Kaczmarz algorithm is widely acknowledged across various fields, including computational tomography \cite{gordon1970algebraic, popa2004kaczmarz}, brain imaging \cite{zhou2013tensor} and image processing \cite{eggermont1981iterative,chen2021regularized}, underscoring its utility in practical scenarios. Building upon the classical Kaczmarz algorithm, several extensions have been developed, including those that incorporate randomness, block implementation, and regularization. In particular, the Randomized Kaczmarz Algorithm (RKA) \cite{strohmer2009randomized} introduces a random scheme of selecting projections at each step, aligning with methods like coordinate descent or stochastic gradient descent under certain conditions. To improve the convergence rate, the block Kaczmarz method \cite{needell2014paved,needell2015randomized} has been proposed to group indices into blocks for each iteration and thereby consider multiple linear constraints simultaneously, which has shown the computational effectiveness in handling extensive linear systems. Recently, regularized Kaczmarz algorithms have been developed to enhance stability and convergence of the classical Kaczmarz algorithm for solving ill-conditioned linear systems by incorporating additional regularization terms that account for the sparsity or smoothness of the solution \cite{burger2006regularizing,lorenz2014linearized,chen2021regularized}. While many Kaczmarz type algorithms have been designed to expedite computations by reducing the desired number of iterations, they possess limited capability to manage large-scale datasets with multi-linear structures.

Recently, to represent the multi-linear structure and intricate patterns in data, tensor modeling and optimization have become prevalent in various applications. In particular, Kaczmarz type algorithms have been extended to solve tensor linear systems by adapting the method for related tensor recovery problems. For example, the Kaczmarz algorithm can be adapted from matrix-vector to tensor-vector multiplication scenarios \cite{wang2023solving}, as well as third-order tensor-tensor products \cite{chen2021regularized,ma2022randomized}. The recent literature on Kaczmarz-type tensor algorithms predominantly deals with third-order tensors. Extending these algorithms to higher-order tensors presents a significant challenge, as the conventional t-product and its algebraic frameworks are defined solely for third-order tensors. Yet, higher-order tensors frequently occur in practical scenarios, such as fourth-order color videos and hyperspectral data. Additionally, these tensors find application in areas like image inpainting, sparse signal recovery, and image destriping. Operating directly with tensors, as opposed to converting them into matrices, is beneficial because it maintains the multidimensional structure of the data and underlying correlations.

Tensor Kaczmarz algorithms can be further extended to incorporate regularization terms or to handle tensor linear constraints, similar to their matrix counterparts. These extensions aim to improve the convergence and stability of the algorithm, especially for large-scale and/or ill-conditioned tensor systems. These methods are particularly effective in tensor recovery problems, where the goal is to estimate tensors from incomplete or corrupted data. Common assumptions in these problems include the sparsity or low-rankness of the tensor, reflecting the underlying regularity of the data. In tensor recovery models, the regularization term plays a critical role in encouraging sparse or low-rank solutions. There are many convex regularizations in sparse signal recovery that can preserve sparsity of the solution, such as the $\ell_1$-norm \cite{tibshirani1996regression} and log-sum penalty \cite{candes2008enhancing}. Recent advancements demonstrate how Kaczmarz algorithms integrated with regularization techniques, such as the $\ell_1$-norm \cite{chen2021regularized} and log-sum penalty \cite{henneberger2023log}, deliver promising outcomes for addressing tensor recovery challenges. Unlike the more common $\ell_1$-norm, which promotes sparsity uniformly by mapping small elements to zero, the $p$-th power of the $\ell_1$-norm, also denoted by $\ell_1^p$, introduces a threshold that depends on the relationship between the components of a vector when $p>1$ \cite{prater2023constructive}. In addition, the availability of a closed-form proximal operator for this norm simplifies the implementation of iterative algorithms, making it a compelling choice for promoting relative sparsity in complex optimization scenarios. This background sets the stage for our investigation, where we improve upon recent regularized algorithms as we explore tensor recovery for tensors of order three or higher, employing a power function of the $\ell_1$-norm regularization term to tackle sparse or low-rank tensor recovery challenges efficiently.

In this paper, motivated by the advantages of the power of the $\ell_1$ norm regularization and building upon the existing Kaczmarz extensions, we introduce a novel  regularized Kaczmarz algorithm framework for reconstructing high-order tensors from partial tensor data. Comprehensive convergence analysis for various algorithmic variants is provided. Through extensive numerical experiments on various tensor data sets, we validate the convergence properties and demonstrate the effectiveness of the proposed algorithms in addressing high-order tensor recovery tasks.

The major contributions of this paper are as follows:
\begin{enumerate}
    \item We derive a novel closed form of the proximal operator of the power of $\ell_1$ norm is proposed, together with useful properties. Based on this, we propose two novel regularized Kaczmarz algorithms for recovering high-order tensors with sparse or low-rank structures, and analyze the convergence of these algorithms.
    \item To further expedite computation and convergence, we present two extensions of the new algorithm framework: its block and accelerated variants accompanied with convergence guarantees.
    \item Numerical experiments on diverse synthetic and real world data justify the effectiveness of the proposed methods. Furthermore, applications to image destriping and color video deblurring demonstrate the utility of the proposed methods, together with practical guidance about parameter selection and computational complexity analysis.
\end{enumerate}
The rest of the paper is organized as follows: Section~\ref{sec:Prelim} introduces basic concepts and results for high-order tensors and convex optimization. Our regularized Kaczmarz algorithms are proposed in Section~\ref{sec:Prop} along with thorough convergence analysis in Section~\ref{sec:Converg}. Extensions of our algorithm, including a block variant and an accelerated version, are detailed in Section~\ref{sec:exten}, and various numerical experiments are shown in Section~\ref{sec:num}. Finally, the conclusion and future work are summarized in Section~\ref{sec:Con}.

\section{Preliminaries}\label{sec:Prelim}

Throughout this paper, vectors are represented by bold lowercase letters, e.g., $\x$; matrices are represented by capital letters, e.g., $X$;  tensors are denoted by calligraphic letters, e.g., $\mathcal{X}$. We use $\|A\|_*=\sum_{i}\sigma_i(A)$ to denote the matrix nuclear norm. Let $\R$ be the set of all real numbers, $\mathbb{C}$ the set of all complex numbers, and $[n]=\{1,2,\ldots,n\}$ where $n$ is an integer. We denote the set of all $m$-th order real or complex tensors of size $n_1\times \ldots\times n_m$ by $\R^{n_1\times \ldots \times n_m}$ or $\mathbb{C}^{n_1\times \ldots\times n_m}$. To make the paper self-contained, we follow the convention in \cite{qin2022low}, and list the most fundamental definitions related to tensor forms and operations in Table~\ref{tab:definitions}, as well as in Definitions~\ref{def:tensors}, \ref{def:orth}, \ref{def:fdiag}, and \ref{def:tsvd}.

\begin{table}[htp]
\centering
\begin{tabular}{l p{10cm}}
\hline
\textbf{Notation} & \textbf{Explanation} \\
\hline
$\A\odot \B$ & Hadamard product of two tensors $\A,\B$ with the same size, i.e., $(\A\odot \B)(i_1,i_2,...,i_m)=\A(i_1,i_2,...,i_m)\odot \B(i_1,i_2,...,i_m)$\\
$\norm{\A}_1$ & tensor $\ell_1$ norm, i.e., the sum of the absolute values of all the entries in $\A$\\
 $\A_i$ &$\A_i = \A(:,...,:,i)$  the order-$(m-1)$ tensor slice constructed by keeping the $m$-th index of $\A$ fixed at $i$\\

$\text{circ}(\mathcal{A})$ & $
        \text{circ}(\A) = \begin{pmatrix}
        \A_1 & \A_{n_m} & \A_{n_m}-1& \cdots & \A_2\\
        \A_2 & \A_{1} & \A_{n_m}& \cdots & \A_3\\
        \vdots & \vdots & \vdots & \ddots& \vdots\\
         \A_{n_m} & \A_{n_m}-1& \A_{n_m}-2& \cdots & \A_1
        \end{pmatrix}
   $ \\

$\text{circ}^k(\mathcal{A})$ & repeated application of the operation $\text{circ}(\cdot)$ to $\mathcal{A}$ by $k$ times, where $\text{circ}^{-1}$ is the inverse operation \\

$\text{unfold}(\mathcal{A})$ & $\text{unfold}(\A) = \begin{pmatrix}
        \A_1\\
        \A_2 \\
        \vdots\\
         \A_{n_m} \end{pmatrix}$ \\

$\text{fold}(A)$ & inverse operation of $\text{unfold}(\cdot)$, folding the matrix back into a tensor. \\

$\A^j$& ${\A^j=\A(:,:,i_3,...,i_m)}$  where $j = (i_m-1)n_3\cdots n_{m-1}+\cdots+(i_4-1)n_3+i_3$\\

$\text{bdiag}(\mathcal{A})$&   block diagonal matrix form of $\A$  with diagonal blocks $\A^1,\ldots, \A^J$ where $J=n_3n_4\cdots n_m$ \\

$\mathcal{A}\Delta\mathcal{B}$ & facewise product of $\A$ and $\B$, denoted by $\C=\A\Delta\B$, is defined through
         $\text{bdiag}(\C)=\text{bdiag}(\A)\cdot\text{bdiag}(\B)$\\

$\A*_L\B$ & $L$-based t-product $\A*_L\B = L^{-1}(\A_L\Delta \B_L)$\\

$\mathcal{A}_L^*$ & complex conjugate transpose of the transformed tensor $\A_L$, defined as
${(\A^*)_L(:,:,i_3,...,i_m) = (\A_L(:,:,i_3,...,i_m))^*}$
for all ${i_j\in[n_j]}$, ${j\in\{3,...,m\}}$. \\

$\J$ & identity tensor if $\J_L(:,:,i_3,...,i_m)=I_n$ for all ${i_j\in [n_j]}$, ${j\in\{3,...,m\}}$.\\
\hline
\end{tabular}
\caption{Summary of main notation and operations. Here ${\A\in\R^{n_1\times n_2 \times n_3\times ...\times n_m}}$ and ${\B\in\R^{n_2\times l\times n_3\times ...\times n_m}}$.}
\label{tab:definitions}
\end{table}

\begin{definition}\cite{qin2022low}\label{def:tensors}

 A general \textbf{linear transform}
        $L:\mathbb{C}^{n_1\times \ldots\times n_m}\to \mathbb{C}^{n_1\times \ldots\times n_m}$ is defined as $$ L(\A) = \A\times_3 U_{n_3}\times ...\times_m U_{n_m}:=\A_L,$$ where $U_{n_i}\in{\mathbb{C}^{n_i\times n_i}}$ are the corresponding transform matrices. When the matrices $\{U_{n_i}\}_{i=3}^m$ are invertible, the linear transform $L$ is called invertible. Moreover, we assume there exists a constant $\rho>0$ such that
        the corresponding matrices $\{U_{n_i}\}_{i=3}^m$ of the invertible linear transform $L$ satisfy the following condition:
        \[
        \begin{aligned}
            &(U_{n_m}\otimes U_{n_{m-1}}\otimes ...\otimes U_{n_3})\cdot(U^*_{n_m}\otimes U^*_{n_{m-1}}\otimes ...\otimes U^*_{n_3})\\ &=(U^*_{n_m}\otimes U^*_{n_{m-1}}\otimes ...\otimes U^*_{n_3}) \cdot(U_{n_m}\otimes U_{n_{m-1}}\otimes ...\otimes U_{n_3})\\
        &=\rho I,
        \end{aligned}\]
        where $\otimes$ denotes the Kronecker product and $U_{n_i}^*$ is the complex conjugate transpose of $U_{n_i}$.
        Here the identity matrix $I$ is of size $n_3 n_4....n_m \times n_3 n_4....n_m $.
\end{definition}

There are many ways to define the linear transform $L$. For example, if each $U_{n_i}$ is the matrix of the 1D Fourier transform, then the corresponding $L$ is called the \textbf{Fourier transform}, which can be implemented using the command ``\verb"fft"'' in Matlab which leads to $\rho = n_3 n_4\ldots n_m$. In general, if each $U_{n_i}$ is unitary, then $\rho=1$. 
For example, in the discrete cosine transform (DCT), each $U_{n_i}$ is an orthogonal DCT matrix. Based on the linear transform $L$, we can define the $L$-based t-product (see Table~\ref{tab:definitions}). It is worth noting that the third-order case of the $L$-based t-product reduces to the matrix-mimetic tensor-tensor product introduced in \cite{kilmer2019tensor}.

\begin{definition}\cite{qin2022low}\label{def:orth}
A tensor $\mathcal{X}$ is \textbf{orthogonal }if $\mathcal{X}^**_L\mathcal{X}=\mathcal{X}*_L\mathcal{X}^*=\J$.
\end{definition}

\begin{definition}\cite{qin2022low} \label{def:fdiag}
A tensor $\X$ is \textbf{f-diagonal} if each frontal slice ${\X(:,:,i_3,...,i_m)}$ is diagonal for all  $i_j\in [n_j]$ and $j\in\{3,...,m\}$.
\end{definition}
\begin{definition}\cite{qin2022low}\label{def:tsvd}
The \textbf{tensor Singular Value Decomposition (t-SVD)} of a tensor $\X\in\R^{n_1\times n_2\times\ldots\times n_m}$ induced by $*_L$ is
\[
\mathcal{X}=\mathcal{U}*_L\mathcal{S}*_L\mathcal{V}^*\] where $\mathcal{U}\in\R^{n_1\times n_1\times n_3\times\ldots\times n_m}$ and $\mathcal{V}\in\R^{n_2\times n_2\times n_3\times \ldots\times n_m}$ are orthogonal and the core tensor $\mathcal{S}\in\R^{n_1\times n_2\times n_3\times\ldots\times n_m}$ is f-diagonal.
Moreover, the \textbf{t-SVD rank} of tensor $\X$ is defined as \[\text{rank}_{\text{t-SVD}}(\X) = \#\{i: S(i,i,1,1,...,1)\neq \mathcal{O}\}\]
where $\#$ denotes the cardinality of a set and $\mathcal{O}$ is the zero tensor of size $n_3\times\ldots\times n_m$.
\end{definition}

\begin{corollary}\label{cor:refsep}
There exist several separability properties about the $L$-based t-product. Consider $\A\in\R^{n_1\times n_2 \times n_3\times ...\times n_m}$ and $\X\in\R^{n_2\times l\times n_3\times ...\times n_m}$.
\begin{enumerate}
\item Separability in the first dimension.
Let $\A(i) = \A(i,:,:,\ldots,:)$ denote the $i$th horizontal slice of size $1\times n_2\times n_3\times...\times n_m$. Then,
\[ (\A *_L \X) (i) = L^{-1}(\A_L(i)\Delta \X_L).\]
\item Sum separability in the second dimension.
\begin{align}
    (\A *_L \X) = \sum_{j=1}^{n_2} \A(:,j,:,...,:)*_L\X(j,:,:,...,:).\label{sumsep}
\end{align}
\item Circular convolution in higher dimensions. For $\A,\X\in\R^{1\times...\times 1\times n_j\times 1...\times 1}$ when $j = 2,\ldots,m$, we have
\[\text{squeeze}(\A*_L\X)=\text{circ(squeeze}(\A))\text{squeeze}(\X),\]
where $\text{squeeze}(\cdot)$ removes the dimensions of size one from the given tensor. In particular, if $j=m=3$, then this property reduces to the special case in \cite{chen2021regularized}.

\item Frontal slice separability.
For each frontal slice $j$, we have
\[(L(\A*_L\X))^j = (L(\A))^j(L(\X))^j\]
In addition, if one of the high dimensions from $\{i_3,...,i_m\}$ is fixed as one,  we have
\[
(L(\A*_L\X))(:,:,...,1,...) = (L(\A))(:,:,...,1,...)\Delta(L(\X))(:,:,...,1,...).
\]

\item Mode-wise unfolding separability.
Let $U_{n_i}\in{\C^{n_i\times n_i}}$ be the corresponding transform matrices to $L:\R^{n_1\times n_2\times ....\times n_m}\rightarrow \R^{n_1\times n_2\times ....\times n_m} $ such that $\A_L = L(\A) = \A\times_3 U_{n_3}\times_4 ...\times_m U_{n_m}$. By letting $\A_{(j)}$ be the mode-$j$ matrix unfolding of tensor $\A$, then we have
\begin{align*}
    (\A*_L\X)_{(j)}&= U^{-1}_{n_j}(\A_L\Delta \X_L)_{(j)}(U^{-1}_{n_m}\otimes U^{-1}_{n_{m-1}}\otimes \cdots \otimes U^{-1}_{n_3}\otimes I_{n_2}\otimes I_{n_1})^*.
\end{align*}
\item Let $\A\in\R^{n_1\times n_2\times n_3...\times n_m}$ and $\X\in\R^{n_2\times l\times n_3...\times n_m}$. Then the classical t-product, i.e., the $L$-based t-product when $L$ is the identity map, is defined as
\begin{align}\label{convolution}   \underbrace{\A*\X}_{\mbox{order-$m$ product}} = \text{fold}(\underbrace{\text{circ}(
    \A)*\text{unfold}(\X)}_{\mbox{order-$(m-1)$ product}}).
\end{align}
By applying this property repeatedly, we get
\begin{align*}
    \A*\X &= \sum_{j=1}^{n_2} \A(:,j,:,...,:)*\X(j,:,:,...,:)& \text{by \eqref{sumsep}}\\
    &= \sum_{j=1}^{n_2} \text{fold}(\text{circ}(\A(:,j,:,...,:))*\text{unfold}(\X(j,:,:,...,:)))\\
     &= \sum_{j=1}^{n_2} \text{fold}(\text{circ}^2(\A(:,j,:,...,:))*\text{unfold}^2(\X(j,:,:,...,:)))\\
    &\phantom{1} \vdots\\
     &= \sum_{j=1}^{n_2} \text{fold}(\text{circ}^{m-2}(\A(:,j,:,...,:))
    \text{unfold}^{m-2}(\X(j,:,:,...,:)))\\
\end{align*} where $\text{circ}^{m-2}(\A(:,j,:,...,:)$ is the block circulant matrix at the base level of the $\text{circ}$ operator and is of size $n_1n_3...n_m\times n_3...n_d$. In addition, $\text{unfold}^{m-2}(\X(j,:,:,...,:)$ is the $n_3...n_d\times l$ matrix formed by applying $\text{unfold}(\cdot)$ repeatedly. Note that when $\A\in\R^{1\times n_2\times n_3}$ and $\X\in n_2\times 1\times n_3$ this reduces to the sum of vector convolutions \cite[(2.21)]{chen2021regularized}.

    \end{enumerate}
\end{corollary}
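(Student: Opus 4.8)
The plan is to reduce every item to the single defining identity $\A *_L \X = L^{-1}(\A_L \Delta \X_L)$, together with the two structural facts that drive the whole corollary: first, the linear transform $L$ acts only on modes $3,\ldots,m$ (it is a product $\times_3 U_{n_3}\times\cdots\times_m U_{n_m}$), so it commutes with any operation that touches only the first two modes; and second, the facewise product $\Delta$ is, block by block, an ordinary matrix multiplication, since $\text{bdiag}(\A_L\Delta\X_L)=\text{bdiag}(\A_L)\,\text{bdiag}(\X_L)$ and a product of block-diagonal matrices multiplies corresponding diagonal blocks. I would prove the frontal-slice property (item 4) first, because it is essentially this second fact read off directly: applying $L$ to both sides of the definition gives $L(\A*_L\X)=\A_L\Delta\X_L$, and equating the $j$th diagonal block of the bdiag forms yields $(L(\A*_L\X))^j=(L(\A))^j(L(\X))^j$. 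Fixing one of the higher indices to one simply restricts attention to the sub-collection of frontal slices carrying that index, which gives the $\Delta$ form of the second displayed equation in item 4.

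From item 4, items 1 and 2 follow by slicing the per-frontal-slice matrix products. For item 1, extracting the $i$th horizontal slice fixes the first mode; since $L^{-1}$ leaves the first mode untouched, slicing commutes with $L^{-1}$, and in each face the matrix product $\A_L^j\X_L^j$ restricted to its $i$th row equals $(\A_L(i))^j\X_L^j$; reassembling across faces and applying $L^{-1}$ gives $(\A*_L\X)(i)=L^{-1}(\A_L(i)\Delta\X_L)$. For item 2, I would expand each face product as a sum of column-times-row contributions over the contracted index, $\A_L^j\X_L^j=\sum_{k=1}^{n_2}\A_L^j(:,k)\X_L^j(k,:)$, and then use the linearity of $L^{-1}$ together with the definition of $*_L$ applied to the individual column and row slices to pull the sum outside, obtaining $\sum_{k=1}^{n_2}\A(:,k,:,\ldots,:)*_L\X(k,:,:,\ldots,:)$.

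Items 3 and 6 treat the specialization to circular convolution. For item 6 (the identity transform), I would verify the single-step block identity $\A*\X=\text{fold}(\text{circ}(\A)*\text{unfold}(\X))$ directly from the block structure of $\text{circ}$ and the facewise product, and then induct on the nesting depth: each application of item 2 peels off one sum over the second mode and each $\text{fold}/\text{circ}/\text{unfold}$ pair lowers the order by one, until the base level produces the block-circulant matrix of size $n_1n_3\cdots n_m\times n_3\cdots n_m$ multiplying the fully unfolded vector. Item 3 is then the scalar-slice case of this chain: when all modes but the $j$th are singletons, $\text{squeeze}$ collapses the tensors to vectors and the t-product reduces to multiplication by $\text{circ}(\text{squeeze}(\A))$, recovering the vector circular convolution of \cite{chen2021regularized} when $j=m=3$.

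The technical heart, and the step I expect to be the main obstacle, is item 5, the mode-$j$ unfolding identity. Here I would combine two ingredients: the relation $L(\A*_L\X)=\A_L\Delta\X_L$, so that $(\A*_L\X)_{(j)}=(L^{-1}(\A_L\Delta\X_L))_{(j)}$, and the standard rule for how a mode-$j$ unfolding interacts with mode-$k$ products, which expresses the unfolding of $\Y\times_3 U^{-1}_{n_3}\times_4\cdots\times_m U^{-1}_{n_m}$ as $U^{-1}_{n_j}\Y_{(j)}$ multiplied on the right by a Kronecker product of the remaining inverse factors together with identity blocks $I_{n_1},I_{n_2}$ for the untouched first two modes. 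The delicate points are fixing the order of the Kronecker factors to match the chosen unfolding convention and placing the conjugate transpose correctly, which reflects the relationship between the inverses and adjoints of the transform matrices recorded in Definition~\ref{def:tensors}; getting this bookkeeping exactly right is the crux of the argument.
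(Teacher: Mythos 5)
The paper contains no proof of this corollary: the six properties are stated as direct consequences of the definition of the $*_L$-product, and the only argument appearing anywhere is the chain of equalities already embedded in the statement of item 6. So there is nothing to compare your write-up against line by line; what can be assessed is whether your plan holds up, and for the most part it does. Your organization is the natural one: every item reduces to $\A*_L\X=L^{-1}(\A_L\Delta\X_L)$, the block characterization $\text{bdiag}(\A_L\Delta\X_L)=\text{bdiag}(\A_L)\,\text{bdiag}(\X_L)$, and the fact that $L$ acts only on modes $3,\ldots,m$ and therefore commutes with slicing in the first two modes. Doing item 4 first, then reading item 1 off row extraction in each face and item 2 off the column-times-row expansion $\A_L^j\X_L^j=\sum_{k}\A_L^j(:,k)\,\X_L^j(k,:)$ together with linearity of $L^{-1}$, is exactly how these identities go. One point you leave implicit in items 3 and 6: the circulant identities are properties of the classical t-product only (item 6 is essentially a recursive definition, and item 3 its one-nontrivial-mode case); for a general invertible $L$, e.g.\ a DCT-based one, the convolution identity displayed in item 3 is false, so you must state that you are specializing $L$ there.

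The one place where your argument and the printed statement genuinely part ways is item 5, which you rightly call the crux. The standard unfolding rule you invoke, applied to $L^{-1}(\A_L\Delta\X_L)$ whose factors are $U_{n_i}^{-1}$ on modes $3,\ldots,m$ and identities on modes $1$ and $2$, gives, for $j\in\{3,\ldots,m\}$,
\[
(\A*_L\X)_{(j)}=U_{n_j}^{-1}\,(\A_L\Delta\X_L)_{(j)}\,\bigl(U_{n_m}^{-1}\otimes\cdots\otimes U_{n_{j+1}}^{-1}\otimes U_{n_{j-1}}^{-1}\otimes\cdots\otimes U_{n_3}^{-1}\otimes I_{l}\otimes I_{n_1}\bigr)^{T},
\]
with the mode-$j$ factor \emph{excluded} from the Kronecker product and a plain transpose. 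The corollary as printed keeps all of $U_{n_3}^{-1},\ldots,U_{n_m}^{-1}$ inside the Kronecker product (so $U_{n_j}^{-1}$ would appear both outside and inside), writes $I_{n_2}$ where the second mode of $\A*_L\X$ has size $l$, and applies a conjugate transpose, which is not equivalent to the transpose for complex transforms such as the FFT. Indeed, the printed right-hand factor does not even have the correct dimensions to multiply $(\A_L\Delta\X_L)_{(j)}$ on the right in general. Your derivation, carried out carefully, proves the identity displayed above rather than the paper's literal formula; you should flag the printed statement as containing a typo instead of contorting the Kronecker bookkeeping to match it.
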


\begin{remark}
    In \cite{qin2022low} and \cite{lu2018tensor}, the algorithm used to calculate the complex conjugate transpose acts on each frontal slice of $\A$. Thus, for any tensor $\A$, we have
\begin{align}\label{bdiag}
    (\text{bdiag}(\A))^* = \text{bdiag}(\A^*).
\end{align}
By the fact that the complex conjugate transpose is calculated on the frontal slices, we have
\begin{align}
    (\A_L)^*(:,:,i_3,...,i_m) = (\A_L(:,:,i_3,...,i_m))^* = (\A^*)_L(:,:,i_3,...,i_m).\label{property3}
\end{align}
\end{remark}

\begin{definition}\label{def:stconv}
A continuous function $f: \R^{n_1\times n_2\times ...\times n_m}\rightarrow \R$ is called \textbf{$\alpha$-strongly convex} for some $\alpha>0$ if
it satisfies
 \[
 f(\Y)\geq f(\X)+\langle\X^*,\Y-\X\rangle +\frac{\alpha}{2}\|\Y-\X\|_F^2,
 \]
for any $\X,\Y\in \R^{n_1\times n_2\times ...\times n_m}$ and $\X^*\in\partial f(\X)$.
\end{definition}

\section{Proposed Algorithms}\label{sec:Prop}
Consider a tensor recovery problem where only a subset of the entries of the high-order tensor are available or are damaged by some additive noise. Characteristics of the underlying tensor such as sparsity in some transform domain or by itself, or low-rankness with respect to a specific tensor decomposition, usually facilitate the solution estimation.
In this section, we propose two novel regularized Kaczmarz algorithms: one for sparse tensor recovery and the other for low-rank tensor recovery. Both algorithms incorporate the power of the $\ell_1^p$ norm regularization.

\subsection{Proximal Operator of \texorpdfstring{$\ell_1^p$}{L1p}}\label{subsec:prox}

Proximal operators have been widely used in convex optimization algorithms especially when the objective function is non-differentiable \cite{parikh2014proximal}. In this work, we focus on the \textbf{proximal operator} of  $\ell_1^p$ in the tensor setting which is defined as
\begin{align}\label{eqn:prox}
    \prox_{\lambda \norm{\cdot}_1^p}(\Z) = \argmin_{\X\in\R^{n_2\times l\times\ldots\times n_m}}\frac{1}{2}\|\X-\Z\|_F^2+ \lambda \|\X\|_1^p.
\end{align}
In what follows, we derive a novel closed-form solution of the proximal operator for the $\ell_1^p$ norm when $p=1,2,3,4$ for tensors which will be used in the proposed regularized Kaczmarz algorithms. Note that our derivations and solution forms are different from those by the iterative constructions in \cite{prater2023constructive}.

\begin{lemma}\label{lemma:prox}
If $\Z\in\R^{n_2\times l\times \ldots\times n_m}$ and $\lambda>0$,  the optimal solution $\X^*$ to \eqref{eqn:prox} when $p=1,2,3,4$ has the following form
\begin{equation}
    \X^*=\sign(\Z)\odot\max\{|\Z|-p\lambda g_p^{p-1},\mathcal{O}\}.
\end{equation}
The function $g_p(\norm{\Z}_1)$ depends on $p$ and is given by
\begin{equation}\label{eqn:gp}
g_p=\left\{
\begin{aligned}
&1,&&p=1;\\
&\frac{\norm{\Z}_1}{2n\lambda+1},&&p=2;\\
&\frac{-1+\sqrt{1+12n\lambda\norm{\Z}_1}}{6n\lambda},&&p=3;\\
&\sqrt[3]{\frac1{8n\lambda}(\norm{\Z}_1+\sqrt{\Delta})}+\sqrt[3]{\frac1{8n\lambda}(\norm{\Z}_1-\sqrt{\Delta})},&&p=4.
\end{aligned}
\right.
\end{equation}
Here $\Delta=\norm{\Z}_1^2+\frac1{27n\lambda}$ and $n=n_2ln_3\ldots n_m$.

\end{lemma}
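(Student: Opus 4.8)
The plan is to proceed via the first-order optimality condition for the strongly convex problem~\eqref{eqn:prox}. First I would record that, for $p\geq 1$, the map $\X\mapsto\|\X\|_1^p$ is convex, being the composition of the nondecreasing convex scalar function $t\mapsto t^p$ on $[0,\infty)$ with the convex norm $\|\cdot\|_1$; hence the objective in~\eqref{eqn:prox} is the sum of a $1$-strongly convex quadratic and a convex term and admits a unique minimizer $\X^*$. That minimizer is characterized by the inclusion $\mathcal{O}\in \X^*-\Z+\lambda\,\partial(\|\cdot\|_1^p)(\X^*)$.

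The key step is to evaluate the subdifferential by the chain rule for $h\circ f$ with $h$ differentiable and nondecreasing, giving $\partial(\|\cdot\|_1^p)(\X^*)=p\|\X^*\|_1^{p-1}\,\partial\|\X^*\|_1$, where $\partial\|\X^*\|_1$ is the usual $\ell_1$ subdifferential (entries equal to $\sign(\X^*_i)$ on the support and lying in $[-1,1]$ off it). Writing $g_p:=\|\X^*\|_1$ and reading the optimality inclusion entrywise then shows that $\X^*$ is exactly the soft-thresholding of $\Z$ at threshold $\tau=p\lambda g_p^{\,p-1}$, that is
\begin{equation*}
\X^*=\sign(\Z)\odot\max\{|\Z|-p\lambda g_p^{\,p-1},\mathcal{O}\},
\end{equation*}
which is precisely the claimed form. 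It then remains only to pin down the scalar $g_p$.

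To determine $g_p$ I would impose self-consistency: taking the $\ell_1$ norm of both sides of the soft-thresholding identity yields the scalar equation $g_p=\sum_i\max\{|\Z_i|-p\lambda g_p^{\,p-1},0\}$. In the regime where the threshold annihilates no entry this collapses to $g_p=\|\Z\|_1-n\,p\lambda g_p^{\,p-1}$ with $n=n_2ln_3\cdots n_m$, which for $p=1,2,3,4$ is an algebraic equation in $g_p$ of degree $0,1,2,3$: the case $p=1$ gives the constant threshold and $g_1=1$; $p=2$ gives the linear equation $(2n\lambda+1)g_2=\|\Z\|_1$; $p=3$ gives the quadratic $3n\lambda g_3^2+g_3-\|\Z\|_1=0$; and $p=4$ gives the depressed cubic $g^3+\frac{1}{4n\lambda}g-\frac{\|\Z\|_1}{4n\lambda}=0$. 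Solving these by the quadratic formula and by Cardano's formula, and selecting the unique nonnegative real root (forced since $g_p=\|\X^*\|_1\geq 0$), reproduces~\eqref{eqn:gp}; in particular, factoring $\frac{1}{64n^2\lambda^2}$ out of the Cardano discriminant converts it into the stated $\Delta=\|\Z\|_1^2+\frac{1}{27n\lambda}$.

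The main obstacle I anticipate is the thresholding regime. Strictly, the clean reduction $g_p=\|\Z\|_1-np\lambda g_p^{\,p-1}$ holds only when every entry of $\Z$ survives the threshold $\tau$; otherwise the sum runs over the active support and the quantities $n,\|\Z\|_1$ must be replaced by the active count and the active partial $\ell_1$ mass. Thus the delicate part is to argue that the decreasing map $\tau\mapsto\sum_i\max\{|\Z_i|-\tau,0\}$ composed with the increasing map $g_p\mapsto p\lambda g_p^{\,p-1}$ has a unique fixed point and that the closed forms in~\eqref{eqn:gp} identify it. This is immediate under a full-support assumption but in general requires either an active-set refinement or the standing hypothesis that $\lambda$ is small relative to $\min_i|\Z_i|$; verifying that the selected nonnegative root is the one consistent with $\tau\leq\min_i|\Z_i|$ is the remaining bookkeeping.
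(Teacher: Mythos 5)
Your proposal is correct and takes essentially the same route as the paper's proof: first-order optimality gives the soft-thresholding form with threshold $p\lambda\|\X^*\|_1^{p-1}$, taking $\ell_1$ norms gives the scalar consistency equation $pn\lambda g_p^{p-1}+g_p-\|\Z\|_1=0$, and the cases $p=2,3,4$ are solved by the quadratic formula and Cardano's formula exactly as you describe (including factoring $\tfrac{1}{64n^2\lambda^2}$ out of the discriminant to obtain $\Delta$). The obstacle you flag at the end is genuine, but note that the paper does not resolve it either: its derivation passes from the entrywise identity $|x_i|+p\lambda\|\X\|_1^{p-1}=|z_i|$ to the scalar equation with coefficient $n$ by implicitly assuming every entry of $\X^*$ is nonzero, and only appends the $\max\{\cdot,\mathcal{O}\}$ afterwards; your subdifferential chain-rule treatment, which produces the $\max$ directly and isolates the full-support assumption explicitly, is if anything more careful than the published argument.
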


\begin{proof}
When $p=1$, the problem \eqref{eqn:prox} boils down to the proximal operator of the $\ell_1$-norm, which can be directly expressed by the soft-thresholding operator.

When $p\geq2$, we first show that $\sign(\X) = \sign(\Z)$. The optimality condition of \eqref{eqn:prox} yields
\[
    \X-\Z + p\lambda \sign(\X)\|\X\|_1^{p-1} = \mathcal{O}.
\]
Due to the fact that $\X=\sign(\X)\odot |\X|$, we rewrite the above equation as
\[
\sign(\X)\odot\left(|\X| + p\lambda \|\X\|_1^{p-1}\right) = \sign(\Z)\odot|\Z|,
\]
Let $x_i$ and $z_i$ be the $i$-th entry of the respective vectorized version of $\X$ and $\Z$, and let $n=n_2ln_3\ldots n_m$. Then for $i=1,\ldots,n$, we have
\[
\sign(x_i)(|x_i|+p\lambda\norm{\X}_1^{p-1})=\sign(z_i)|z_i|.
\]
Note that $|x_i| + p\lambda \|\X\|_1^{p-1}$ is nonnegative and zero if and only if $\X=\mathcal{O}$. Thus $\sign(x_i) = \sign(z_i)$ and
\[
|x_i|+p\lambda \|\X\|_1^{p-1} = |z_i|.
\]
That is, $\sign(\X)=\sign(\Z)$ and
\begin{equation}\label{eqn:XZ}
|\X|+p\lambda\norm{\X}_1^{p-1}=|\Z|.
\end{equation}
By taking the $\ell_1$-norm on both sides of \eqref{eqn:XZ} and simplification, we obtain an equation about $\norm{\X}_1$ of the form
\begin{equation}\label{eqn:X}
   pn\lambda\norm{\X}_1^{p-1}+\norm{\X}_1-\norm{\Z}_1=0.
\end{equation}
When $p=2,3,4$, we are able to find the real roots,and the nonnegative solution is denoted by $g_p$. In particular, if $p=2$, \eqref{eqn:X} becomes a linear equation which yields the solution
\[
\norm{\X}_1=\frac{\norm{\Z}_1}{2n\lambda+1}:=g_2(\norm{\Z}_1).
\]
If $p=3$, \eqref{eqn:X} becomes a quadratic equation and we obtain the nonnegative solution
\[
\norm{\X}_1=\frac{-1+\sqrt{1+12n\lambda\norm{\Z}_1}}{6n\lambda}:=g_3(\norm{\Z}_1).
\]
If $p=4$, \eqref{eqn:X} is a depressed cubic equation which has a real solution by applying the Cardano's formula of the form
\[
\norm{\X}_1=\sqrt[3]{\frac1{8n\lambda}(\norm{\Z}_1+\sqrt{\Delta})}+\sqrt[3]{\frac1{8n\lambda}(\norm{\Z}_1-\sqrt{\Delta})}:=g_4(\norm{\Z}_1),
\]
where the scaled determinant of the equation $\Delta=\norm{\Z}_1^2+\frac1{27n\lambda}$. Finally,
substitution $g_p$ into \eqref{eqn:XZ} completes the derivation of the proximal operator.

\end{proof}

Note that if $p\geq5$, then there is no closed-form solution in general for the proximal operator of $\ell_1^p$ as it involves finding a root of a polynomial with degree at least four.

\begin{lemma}\label{lem:sym}
Let $\Z\in\R^{n_1\times \ldots\times n_m}$ be a nonzero tensor and $\X^*=\prox_{\lambda\norm{\cdot}_1^p}(\Z)$. Assume $x^*_i$ and $z_i$ are the $i$-th entry of the vectorized version of the optimal solution $\X^*$ and $\Z$, respectively. Then for $i=1,\ldots,n$, the following properties about the sign and the order relationships between $\X^*$ and $\Z$ hold
\begin{equation}\label{eqn:proxprop1}
    x_i^*\begin{cases}
        \geq 0& \text{if }z_i>0\\
        \leq 0& \text{if }z_i<0\\
    \end{cases},
\end{equation}
and
\begin{equation}\label{eqn:proxprop2}
|x_i^*|\geq|x_j^*| \,\text{ if } \,|z_i|>|z_j|.
\end{equation}
In addition, the proximal operator is nonexpansive in the Frobenius norm
\begin{equation}\label{eqn:nonexp}
\|\prox_{\lambda\|\cdot\|_1^p}(\Z)\|_F\leq\|\Z\|_F.
\end{equation}
\end{lemma}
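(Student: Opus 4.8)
The plan is to read off all three statements directly from the closed-form expression in Lemma~\ref{lemma:prox}, whose decisive feature is that the proximal operator acts as a soft-thresholding map with a \emph{single} threshold shared by every entry. First I would set $\tau := p\lambda g_p^{p-1}$, where $g_p = g_p(\norm{\Z}_1)\geq 0$ is the nonnegative scalar supplied by \eqref{eqn:gp}. Since $\lambda>0$, $p\geq 1$, and $g_p\geq 0$, the threshold $\tau$ is a single nonnegative scalar, the same for every index $i$. Lemma~\ref{lemma:prox} then reads entrywise as
$$x_i^* = \sign(z_i)\,\max\{|z_i|-\tau,\,0\}, \qquad i=1,\ldots,n,$$
so that $|x_i^*| = \max\{|z_i|-\tau,0\}$. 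I would stress the index-independence of $\tau$ at the outset, since this is what makes the comparison across entries meaningful.

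For the sign property \eqref{eqn:proxprop1}, the factor $\max\{|z_i|-\tau,0\}$ is always nonnegative, so the sign of $x_i^*$ is inherited from $\sign(z_i)$: if $z_i>0$ then $x_i^*\geq 0$, and if $z_i<0$ then $x_i^*\leq 0$, which is exactly \eqref{eqn:proxprop1}. For the order property \eqref{eqn:proxprop2}, I would invoke the monotonicity of the scalar map $t\mapsto \max\{t-\tau,0\}$, which is nondecreasing for $t\geq 0$; hence $|z_i|>|z_j|$ forces $\max\{|z_i|-\tau,0\}\geq\max\{|z_j|-\tau,0\}$, i.e.\ $|x_i^*|\geq|x_j^*|$. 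Both claims are immediate consequences of the entrywise form, given that $\tau$ does not vary with the entry.

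For the nonexpansiveness \eqref{eqn:nonexp}, I would argue entrywise: because $\tau\geq 0$, we have $\max\{|z_i|-\tau,0\}\leq|z_i|$, hence $|x_i^*|\leq|z_i|$ for every $i$. Squaring and summing over all entries yields $\norm{\X^*}_F^2=\sum_i |x_i^*|^2\leq\sum_i |z_i|^2=\norm{\Z}_F^2$, and taking square roots gives the claim.

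There is no serious obstacle once the closed form is in hand; the only point requiring care is conceptual rather than computational. One must recognize that the single shared threshold $\tau$, whose nonnegativity and index-independence are guaranteed by the proof of Lemma~\ref{lemma:prox} (the threshold depends only on $\norm{\Z}_1$ through $g_p$), is precisely what underlies both the cross-entry comparison in \eqref{eqn:proxprop2} and the entrywise magnitude bound in \eqref{eqn:nonexp}. Once this structural observation is made explicit, the remaining steps are routine.
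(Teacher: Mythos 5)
Your proof is correct where it applies, but it takes a genuinely different route from the paper's. You read all three properties off the closed form of Lemma~\ref{lemma:prox}, whose decisive feature, as you note, is the single index-independent threshold $\tau = p\lambda g_p^{p-1}\geq 0$; the sign, order, and nonexpansiveness claims then reduce to elementary properties of the scalar map $t\mapsto\max\{t-\tau,0\}$. The paper never invokes the closed form. For \eqref{eqn:proxprop1} and \eqref{eqn:proxprop2} it uses an exchange argument directly on the minimization problem defining the prox: if $x_i^*$ had the sign opposite to $z_i$, or if $|x_i^*|<|x_j^*|$ while $|z_i|>|z_j|$, then flipping the sign of $x_i^*$ (respectively, swapping the absolute values $|x_i^*|$ and $|x_j^*|$) would leave $\lambda\norm{\X}_1^p$ unchanged and strictly decrease $\frac12\norm{\X-\Z}_F^2$, contradicting optimality. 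For \eqref{eqn:nonexp} it combines $\prox_{\lambda\|\cdot\|_1^p}(\mathcal{O})=\mathcal{O}$ with the general nonexpansiveness of proximal operators of convex functions. The trade-off is generality versus concreteness: your argument is transparent and self-contained, but it inherits the restriction of Lemma~\ref{lemma:prox} to $p\in\{1,2,3,4\}$ --- for $p\geq 5$ the common threshold still exists in principle, but no closed form for $g_p$ in \eqref{eqn:gp} is available, so your proof as written does not literally cover the lemma as stated --- whereas the paper's variational argument works for every $p$ making $\norm{\cdot}_1^p$ convex and is insulated from any subtleties in the closed-form derivation. If you keep your approach, either state the restriction on $p$ explicitly or observe that only the soft-thresholding structure with an index-independent nonnegative threshold (not the explicit value of $g_p$) is actually needed.
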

\begin{proof}
The relationships \eqref{eqn:proxprop1} and \eqref{eqn:proxprop2} can be verified by the fact that a smaller objective value could be obtained by changing the sign of $x_i$ or interchanging the absolute values $|x_i^*|$ and $|x_j^*|$.

Regarding \eqref{eqn:nonexp}, we first notice that $\prox_{\lambda\|\cdot\|_1^p}(\mathcal{O}) =\mathcal{O}$. Then we apply the nonexpansiveness of the general proximal operator \cite{rockafellar2015convex} and get
\begin{equation}
\|\prox_{\lambda\|\cdot\|_1^p}(\Z)\|_F = \|\prox_{\lambda\|\cdot\|_1^p}(\Z)-\prox_{\lambda\|\cdot\|_1^p}(\mathcal{O})\|_F \leq\|\Z\|_F.
\end{equation}

\end{proof}

\begin{lemma}
    Let $g(\X)=\lambda\norm{\X}_1^p+\frac12\norm{\X-\Z}_F^2$ and $\X^*=\prox_{\lambda\norm{\cdot}_1^p}(\Z)$. Then  for any tensor $\X$ with the same size as $\X^*$, we have
    \begin{equation}\label{eqn:g_decay}
    g(\X^*)-g(\X)\leq -\frac1{2}\norm{\X^*-\X}_F^2.
    \end{equation}
\end{lemma}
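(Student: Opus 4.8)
The plan is to observe that the objective $g$ is the sum of a convex regularizer and a strongly convex quadratic, so that $g$ itself is $1$-strongly convex, and then to combine this with the fact that $\X^*$ is its global minimizer. The desired inequality \eqref{eqn:g_decay} is then just the strong-convexity inequality of Definition~\ref{def:stconv} read off at the minimizer.

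First I would verify that $g(\X)=\lambda\norm{\X}_1^p+\frac12\norm{\X-\Z}_F^2$ is $1$-strongly convex in the sense of Definition~\ref{def:stconv}. The map $\X\mapsto\lambda\norm{\X}_1^p$ is convex for every $p\geq1$, being the composition of the convex, nondecreasing function $t\mapsto\lambda t^p$ on $[0,\infty)$ with the convex function $\X\mapsto\norm{\X}_1$. The quadratic $\X\mapsto\frac12\norm{\X-\Z}_F^2$ has (single-valued) subgradient $\X-\Z$ and satisfies Definition~\ref{def:stconv} with $\alpha=1$ by direct expansion, since $\frac12\norm{\Y-\Z}_F^2-\frac12\norm{\X-\Z}_F^2-\langle\X-\Z,\Y-\X\rangle=\frac12\norm{\Y-\X}_F^2$. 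Because adding a convex function to a $1$-strongly convex function preserves $1$-strong convexity, $g$ is $1$-strongly convex.

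Next I would invoke optimality. By the definition \eqref{eqn:prox} of the proximal operator, $\X^*$ is the unique global minimizer of $g$, so Fermat's rule yields $\mathcal{O}\in\partial g(\X^*)$. Applying the strong-convexity inequality of Definition~\ref{def:stconv} at the base point $\X^*$, with the subgradient $\mathcal{O}\in\partial g(\X^*)$ and an arbitrary tensor $\X$ playing the role of $\Y$, gives
\[
g(\X)\geq g(\X^*)+\langle\mathcal{O},\X-\X^*\rangle+\frac12\norm{\X-\X^*}_F^2=g(\X^*)+\frac12\norm{\X-\X^*}_F^2.
\]
Rearranging and using $\norm{\X-\X^*}_F=\norm{\X^*-\X}_F$ produces exactly $g(\X^*)-g(\X)\leq-\frac12\norm{\X^*-\X}_F^2$, which is \eqref{eqn:g_decay}.

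I do not anticipate a genuine obstacle; the two points requiring care are justifying the convexity of $\ell_1^p$ through the composition rule valid for $p\geq1$, and confirming that the strong-convexity modulus is exactly $1$. The latter is the crux: the modulus comes solely from the Frobenius quadratic and is left unchanged by the (merely convex, not strongly convex) regularizer $\lambda\norm{\cdot}_1^p$, so the constant $\frac12$ on the right-hand side is sharp and independent of $p$ and $\lambda$.
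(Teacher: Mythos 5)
Your proposal is correct and is essentially the paper's own argument in different packaging: the paper takes the optimality condition $\Z-\X^*\in\partial(\lambda\norm{\X^*}_1^p)$, applies the subgradient inequality to the regularizer, and expands the quadratic exactly, which is precisely the computation contained in your claims that $g$ is $1$-strongly convex and that the strong-convexity inequality can be read off at the minimizer with the zero subgradient. The only presentational difference is that you route the argument through Definition~\ref{def:stconv} and Fermat's rule applied to $g$ (implicitly invoking the subdifferential sum rule, valid here because the quadratic part is differentiable), whereas the paper inlines exactly these steps.
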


\begin{proof}
First, the optimality condition of $\X^*$ yields
\[
\mathcal{O}\in \partial(\lambda\norm{\X^*}_1^p)+(\X^*-\Z),
\]
which leads to $\Z-\X^*\in\partial(\lambda \norm{\X^*}_1^p)$. For any tensor $\X$, we have
\[\begin{aligned}
g(\X^*)-g(\X)&=\lambda\norm{\X^*}_1^p-\lambda\norm{\X}_1^p+\frac12\norm{\X^*-\Z}_F^2-\frac12\norm{\X-\Z}_F^2\\
&\leq \langle \Z-\X^*,\X^*-\X\rangle +\frac12\norm{\X^*-\X}_F^2+\langle \X^*-\X,\X-\Z\rangle\\
&=-\frac12\norm{\X^*-\X}_F^2.
\end{aligned}\]
Here the second inequality is guaranteed by $\Z-\X^*\in\partial(\lambda\norm{\X^*}_1^p)$.
\end{proof}

Figure \ref{fig:vizpthprox} shows the symmetric properties of the proximal operator, and we summarize Lemma~\ref{lemma:prox} in the  Algorithm \ref{alg:pthprox} which shows how to compute $\prox_{\lambda||\cdot||^p_1}$ for $p=1,2,3,4$.

\begin{figure}[ht]
    \centering    \includegraphics[width = .6\textwidth]{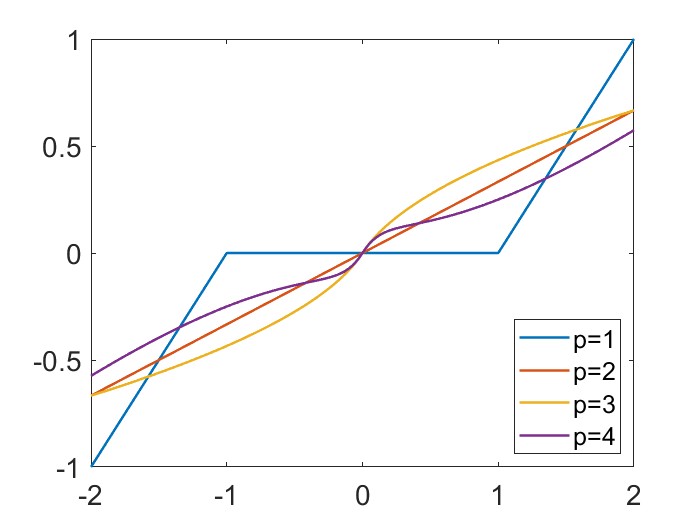}
    \caption{Visualization of the shrinkage of $\prox_{\lambda||\cdot||^p_1}(x)$ for $x\in \R$ and $p=1,2,3,4$.}
    \label{fig:vizpthprox}
\end{figure}

\begin{algorithm}[ht]
\caption{Computing $\prox_{\lambda||\cdot||^p_1}(\Z)$ for a tensor $\Z$}\label{alg:pthprox}
\begin{algorithmic}
\State\textbf{Input:} $\Z\in\R^{n_1\times n_2\times ...\times n_m}$, $p\in\{1,2,3,4\}$, and $\lambda$.
\State\textbf{Output:} $\prox_{\lambda||\cdot||^p_1}(\Z)$
\If{$p=1$}
\State $\prox_{\lambda \norm{\cdot}_1}(\Z)= \sign(\Z)\odot\max\left\{|\Z| - \lambda ,\mathcal{O}\right\}$
\EndIf
\If{$p=2$}
\State $\prox_{\lambda \norm{\cdot}_1^2}(\Z) = \sign(\Z)\odot\max\left\{\left(|\Z| - {2}\lambda \frac{\|\Z\|_1}{1+2\lambda n} \right),\mathcal{O}\right\}$
\EndIf
\If{$p=3$}
\State $\prox_{\lambda \norm{\cdot}_1^3}(\Z) = \sign(\Z)\odot\max\left\{\left( |\Z| - {3}\lambda \left(\frac{-1 + \sqrt{1+12n\lambda \|\Z\|_1}}{6n\lambda}\right)^2\right),\mathcal{O}\right\}$
\EndIf
\If{$p=4$}
\State calculate $P(\Z) = 9 n^2 \lambda^2 \|\Z\|_1 + \sqrt{3} \sqrt{n^3 \lambda^3 (27 n \lambda \|\Z\|_1^2 + 4)}$
\State $\prox_{\lambda \norm{\cdot}_1^4}(\Z) = \sign(\Z)\odot\max\left\{\left( |\Z| - 4\lambda \left(\frac{
    3^{\frac{1}{3}} \left(-3^{\frac{1}{3}} n \lambda + P(\Z)^{\frac{2}{3}}\right)
}{
    6 n \lambda \sqrt[3]{P(\Z)}
}\right)^3\right),\mathcal{O}\right\}$
\EndIf
\end{algorithmic}
\end{algorithm}
\subsection{Sparse Tensor Recovery} We now focus on a sparsity-promoting regularization using the $p$-th power of the $\ell_1$-norm. Given $\A\in\R^{n_1\times n_2\times n_3\times ...\times n_m}$ and $\B\in\R^{n_1\times l\times n_3\times ...\times n_m}$, our goal is to recover a sparse tensor $\X\in\R^{n_2\times l\times n_3\times ...\times n_m}$ such that the linear relationship  $\A*_L\X=\B$ is satisfied. Now we propose a constrained minimization problem based on the power of $\ell_1$-norm regularization:
\begin{align}\label{pth}
\min_{\X}\lambda\norm{\X}^p_{1}+\frac12\norm{\X}_F^2\quad\mbox{s.t.}\quad
\A*_{L}\X=\B,
\end{align}
where $\lambda>0$ is a regularization parameter.

The linear constraint $\A*_L\X = \B$ can be split into $n_1$ constraints by Corollary \ref{cor:refsep},
\begin{align}
    \A(i)*_L\X = \B(i),\quad i\in[n_1].
\end{align}
We define the row space as, $R(\A):=\{\A^T*_L\Y:\Y\in\R^{n_1\times l\times n_3\times ...\times n_m}\}$
and then we assume the optimal solution $\hat{\X}$ to the problem \eqref{pth} satisfies
\[\A*_L\hat{\X}=\B, \hspace{10pt } \partial f(\hat{\X})\cap R(\A)\neq \emptyset.
\]
Given that the objective function in \eqref{pth} exhibits strong convexity under specific conditions, and considering the linear nature of the constraint, we have devised an effective algorithm inspired by the regularized Kaczmarz algorithm \cite{chen2021regularized}. This approach is particularly advantageous for large datasets. Consequently, this leads to two subproblems, one of which involves the application of the proximal operator of the $\ell_1^p$ regularizer.

\begin{algorithm}[H]
\caption{Randomized $\ell_1^p$ regularized Kaczmarz for sparse tensor recovery (L1PK-S)}\label{alg:pth}
\begin{algorithmic}
\State\textbf{Input:} $\B\in\R^{n_1\times l\times n_3\times ...\times n_m}$, $\A\in\R^{n_1\times n_2\times n_3\times...\times n_m}$, $\lambda$, $p$, stepsize $t$, max number of iterations $T$, invertible linear transform $L$, and tolerance $tol$.
\State\textbf{Output:} An approximation of $\hat{\X}$
\State\textbf{Initialize:} $\Z^{(0)}\in R(\A)\subset \R^{n_2\times l\times n_3\times ...\times n_m}, \X^{(0)} = \prox_{\lambda\norm{\cdot}_1}(\Z^{(0)})$
\For{ $k=0,...,T-1$ }
\State choose index $i(k)$ cyclically or randomly from $[n_1]$
\State $\Z^{(k+1)} = \Z^{(k)}+t\A^*(i(k))*_L\frac{\B(i(k))-(\A*_L\X^{(k)})(i(k))}{||\A(i(k))||_F^2}   $
\State \vspace{2pt}$\X^{(k+1)} = \prox_{\lambda\norm{\cdot}^p_1}(\Z^{(k+1)}) $
\State{Terminate if }$||\X^{(k+1)}-\X^{(k)}||_F/||\X^{(k)}||_F<tol$
\EndFor
\end{algorithmic}
\end{algorithm}

\subsection{Low Rank Tensor Recovery}
The concept of a matrix's low-rank property, which is evidenced by the sparsity of its singular values from SVD, can be extrapolated to tensors. By utilizing the $\ell_1^p$ regularized Kaczmarz algorithm for sparse tensor recovery, we adapt this model for the recovery of low-rank tensors by applying the $\ell_1^p$ regularizer to the middle f-diagonal tensor in the t-SVD form of a tensor.
\begin{definition}
    The Nuclear $\ell_1^p$ norm (N$\ell_1^p$) is defined as
    \[\begin{aligned}
\|\X\|_{\text{N}\ell_1^p} = \norm{\S}^p_{1},
    \end{aligned}\]
when $\X^*= \U*_L\S*_L\V^T$ is a t-SVD representation of $\X$.
\end{definition}

We thus consider the regularized minimization problem
\begin{align}
\argmin_{\X}\lambda\|\X\|_{\text{N}\ell_1^p} +\frac12\norm{\X}_F^2\quad\mbox{s.t.}\quad
\A*_{L}\X=\B,
\label{pthsing}
\end{align}
when $p=1$ this reduces to the classical low-rank tensor recovery problem and is solved using the tensor singular value thresholding operator (t-SVT) \cite{qin2022low}. In this work, we focus on  the cases when $p>1$. An explicit form of the proximal operator for the N$\ell_1^p$ norm can be derived in a similar manner to \cite[Theorem 1]{henneberger2023log}.

\begin{definition}
    The tensor singular value proximal operator of the N$\ell_1^p$ norm at $\Z\in \R^{n_1\times n_2\times...\times n_m}$ is defined as
    \begin{align}
\prox_{\lambda\text{N$\ell_1^p$ }}(\Z)=\argmin_{\X}\lambda  \|\X\|_{\text{N$\ell_1^p$ }}+\frac12\norm{\X-\Z}_F^2.
\label{pthsinggen}
\end{align}
\end{definition}
\begin{theorem}\label{thm:lspsingprox}
For each $\Z\in \R^{n_1\times n_2\times...\times n_m}$, if $\X^*$ is a solution to \eqref{pthsinggen}, then there exists a pair $(\U,\V)\in\mathcal{P}(\mathcal{Z})$ such that $\Z = \U*_L \S *_L \V^*$ and a tensor $\D\in \prox_{\lambda\|\cdot\|_1^p }(\S)$ such that $\X^*= \U*_L\D*_L\V^T$.
\end{theorem}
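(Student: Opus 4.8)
The plan is to mirror the classical fact that the proximal operator of an orthogonally invariant matrix norm acts only on the singular values, adapting it to the $*_L$-algebra as in \cite[Theorem~1]{henneberger2023log}. First I would expand the objective in \eqref{pthsinggen} as $\tfrac12\|\X\|_F^2-\langle\X,\Z\rangle+\tfrac12\|\Z\|_F^2+\lambda\|\X\|_{\text{N}\ell_1^p}$. Writing an arbitrary candidate $\X$ in a t-SVD $\X=\bar\U*_L\bar\S*_L\bar\V^*$, I would invoke orthogonal invariance of the Frobenius norm under $*_L$ (a consequence of $\bar\U^**_L\bar\U=\bar\V^**_L\bar\V=\J$) to replace $\|\X\|_F$ by $\|\bar\S\|_F$ and $\|\Z\|_F$ by $\|\S\|_F$; by definition $\|\X\|_{\text{N}\ell_1^p}=\|\bar\S\|_1^p$. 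The only term coupling the orthogonal factors of $\X$ to those of $\Z$ is then $\langle\X,\Z\rangle$, so the whole argument hinges on bounding this inner product.

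The central tool would be a von Neumann-type trace inequality $\langle\X,\Z\rangle\leq\langle\bar\S,\S\rangle$ together with its equality characterization. I would establish it by passing to the transform domain: by the Parseval relation induced by Definition~\ref{def:tensors}, namely $\langle\X,\Z\rangle=\rho^{-1}\langle\X_L,\Z_L\rangle$, and the block-diagonal structure $\text{bdiag}(\cdot)$, the inner product decouples over frontal slices into $\rho^{-1}\sum_j\mathrm{Re}\,\tr\bigl((\X_L^j)^*\Z_L^j\bigr)$. Applying the classical matrix von Neumann inequality slicewise, and using that orthogonality of $\bar\U,\bar\V,\U,\V$ transforms to facewise unitarity while f-diagonality of $\bar\S,\S$ transforms to facewise-diagonal cores, yields the bound, with equality exactly when each slice pair $\X_L^j,\Z_L^j$ shares its singular subspaces, i.e. when $\X$ and $\Z$ admit a common pair of orthogonal factors.

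With this inequality, I would lower-bound the objective by $\tfrac12\|\bar\S-\S\|_F^2+\lambda\|\bar\S\|_1^p+(\text{const})$ and observe that the bound is attained by forcing $\X$ to share $\Z$'s factors, i.e. $\X=\U*_L\bar\S*_L\V^*$. The problem then reduces to $\min_{\bar\S}\tfrac12\|\bar\S-\S\|_F^2+\lambda\|\bar\S\|_1^p$, which is precisely $\prox_{\lambda\|\cdot\|_1^p}(\S)$. Lemma~\ref{lemma:prox} supplies its closed form and Lemma~\ref{lem:sym} guarantees that the minimizer inherits the sign and support of $\S$, so that the optimal $\D\in\prox_{\lambda\|\cdot\|_1^p}(\S)$ is again f-diagonal; consequently $\X^*=\U*_L\D*_L\V^*$ is a legitimate t-SVD-type product, as claimed.

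The main obstacle is the von Neumann inequality, and especially its equality case, in the $*_L$ setting. Two points require care. First, the $\rho$-scaling and the distinction between the original-domain norm $\|\S\|_1$ that defines $\text{N}\ell_1^p$ and the transform-domain singular values produced by the facewise SVDs must be tracked consistently, so that the reduced minimization really is $\prox_{\lambda\|\cdot\|_1^p}(\S)$ evaluated in the original domain. Second, when a frontal slice has repeated singular values the aligning unitaries are non-unique, so the equality condition only yields \emph{existence} of a compatible pair $(\U,\V)\in\mathcal{P}(\Z)$ rather than a canonical one; I would therefore phrase the conclusion as existence, exactly as in the statement, choosing for each slice a unitary that simultaneously diagonalizes $\X_L^j$ and $\Z_L^j$ and assembling them via $L^{-1}$ into tensors $\U,\V$ lying in $\mathcal{P}(\Z)$.
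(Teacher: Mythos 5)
Your proposal follows essentially the same route as the paper's own argument: both expand $\|\X-\Z\|_F^2$, invoke von Neumann's trace inequality together with the orthogonal invariance of the Frobenius norm under $*_L$ to obtain $\|\X-\Z\|_F^2\geq\|\D-\S\|_F^2$, and thereby reduce \eqref{pthsinggen} to the core-tensor problem $\min_{\D}\lambda\|\D\|_1^p+\frac12\|\D-\S\|_F^2$, whose solution is $\prox_{\lambda\|\cdot\|_1^p}(\S)$. The only difference is one of detail, not of approach: the paper cites von Neumann's inequality without proof, whereas you sketch its slicewise verification in the transform domain (via the Parseval relation and facewise unitarity) and flag the equality-case and f-diagonality issues that the paper leaves implicit.
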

\begin{remark} This is an extension of the proof of Theorem $4$ in \cite{prater2022proximity}. We show that $\X^*= \U*_L\D*_L\V^T$ is a solution to \eqref{pthsinggen}. Note that \eqref{pthsinggen} can be reformulated as
 \begin{align}
\min_{\D}\left(\min_{\X, \X = \U*_L\D*_L\V^T}  \lambda \|\D\|_1^p+\frac12\norm{\X-\Z}_F^2\right).
\label{LSPsinggenref}
\end{align}
where $\D$ is the core tensor in the t-SVD representation of $\X$ . We can verify that
\begin{align}
    \|\X-\Z\|_F^2  =
     &\|\X\|_F^2-2\langle\X,\Z\rangle+\|\Z\|_F^2\nonumber\\
      \geq&\|\D\|_F^2-2\langle \D,\S\rangle+\|\S\|_F^2 = \|\D-\S\|_F^2\label{normsimp},
\end{align}
where the last inequality is due to the von Neumann's trace inequality and $\S$ is the core tensor in the t-SVD representation of $\Z$.
Then the optimization problem \eqref{LSPsinggenref} reduces to
\begin{align}
\min_{\D}  \lambda \|\D\|_1^p+\frac12\norm{\D-\S}_F^2.
\label{pthsingsep}
\end{align}
The solution to this objective function is now $\prox_{\lambda \|\cdot\|_1^p}(\S)$ from above.
\end{remark} Similar to the sparse case above, we adapt the regularized Kaczmarz algorithm from \cite{chen2021regularized} to the current $*_L$-based tensor product and consider the regularization term as the $\ell_1^p$ norm applied to the core tensor from t-SVD. Thus we propose a new algorithm for solving the low-rank tensor recovery problem \eqref{pthsing}, which is described in Algorithm~\ref{alg:pthsing}.

\begin{algorithm}[H]
\caption{Randomized $\ell_1^p$ regularized Kaczmarz for low-rank tensor recovery (L1PK-L)}\label{alg:pthsing}
\begin{algorithmic}
\State\textbf{Input:} $\B\in\R^{n_1\times l\times n_3\times ...\times n_m}$, $\A\in\R^{n_1\times n_2\times n_3\times...\times n_m}$, stepsize $t$, $\lambda$, p, max number of iterations $T$, invertible linear transform $L$, and tolerance $tol$.
\State\textbf{Output:} An approximation of $\hat{\X}$
\State\textbf{Initialize:} $\Z^{(0)}\in R(\A)\subset \R^{n_2\times l\times n_3\times ...\times n_m}, \X^{(0)} = \text{t-SVT}_{\lambda}(\Z^{(0)})$
\For{ $k=0,...,T-1$ }
\State choose index $i(k)$ cyclically or randomly from $[n_1]$
\State $\Z^{(k+1)} = \Z^{(k)}+t\A^*(i(k))*_L\frac{\B(i(k))-(\A*_L\X^{(k)})(i(k))}{||\A(i(k))||_F^2}   $

\State $\X^{(k+1)} =\prox_{\lambda \text{N}\ell_1^p}(\Z^{(k+1)}) $
\State{Terminate if} $||\X^{(k+1)}-\X^{(k)}||_F/||\X^{(k)}||_F<tol$
\EndFor
\end{algorithmic}
\end{algorithm}

\section{Convergence Analysis}\label{sec:Converg}
In this section, we detail the convergence guarantees for the proposed methods. By following the  convergence analysis in \cite{chen2021regularized,henneberger2023log}, we describe our main convergence guarantees for the proposed algorithms in Theorem \ref{thm:cycconv} and Theorem \ref{thm:randconv}, corresponding to Algorithm \ref{alg:pth} when the index $i(k)$ is selected cyclically and randomly, respectively. Detailed proofs can be found in the Appendix.

To start with, one can see that the tensor function of the form
    \begin{align}
        f(\X) =\lambda\norm{\X}^p_{1}+\frac12\norm{\X}_F^2 \label{eqn:feqn}
    \end{align}is $1$-strongly convex since $\frac12\norm{\X}_F^2$ is $1$-strongly convex and $\lambda\norm{\X}^p_{1}$ is convex for $p\in\mathbb{N}$ \cite{boyd2004convex}.
By using the result in \cite[Proposition 1]{henneberger2023log} and  following \cite{chen2021regularized}, we can obtain the following convergence guarantees for Algorithm~\ref{alg:pth}.

\begin{theorem}\label{thm:cycconv}
  Let $f$ be defined by \eqref{eqn:feqn}. If $t<2/\rho$, then the sequence $\{\X^{(k)}\}$ generated by Algorithm~\ref{alg:pth} with a cyclical control sequence satisfies
\begin{align}
    f(\X^{(k)})-f(\X^{(k+1)})&\leq \langle \Z^{(k+1)},\X-\X^{(k+1)}\rangle -\langle \Z^{(k)},\X-\X^{(k)}\rangle \ \\
&- t\left(1-\frac{t\rho}{2}\right)\frac{\| A(i(k))*_L(\X^{(k)}-\X)\|_F^2}{\|\A(i(k))\|_F^2}
\end{align}
Moreover, the sequence $\{\X^{(k)}\}$ converges to the solution of \eqref{pth}.
\end{theorem}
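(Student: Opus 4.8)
The plan is to recast the claimed inequality as a statement about the Bregman distance of the iterates to the solution, and to extract from it both the one-step estimate and the convergence. Write $\hat\X$ for the solution of \eqref{pth} (the tensor denoted $\X$ in the statement) and recall that $f(\X)=\lambda\norm{\X}_1^p+\frac12\norm{\X}_F^2$ is $1$-strongly convex. The first observation is that the prox step is a primal/dual pairing: the optimality condition for $\X^{(k)}=\prox_{\lambda\norm{\cdot}_1^p}(\Z^{(k)})$ together with $\partial f(\X)=\X+\partial(\lambda\norm{\X}_1^p)$ gives $\Z^{(k)}\in\partial f(\X^{(k)})$. I would then define the Bregman distance $D_k=f(\hat\X)-f(\X^{(k)})-\langle\Z^{(k)},\hat\X-\X^{(k)}\rangle$, which by $1$-strong convexity satisfies $D_k\ge\frac12\norm{\hat\X-\X^{(k)}}_F^2\ge 0$. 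Expanding $D_{k+1}-D_k$ shows that the target inequality is exactly equivalent to the sufficient-decrease estimate
\[
D_k-D_{k+1}\ \ge\ t\Big(1-\frac{t\rho}{2}\Big)\frac{\norm{\A(i(k))*_L(\X^{(k)}-\hat\X)}_F^2}{\norm{\A(i(k))}_F^2},
\]
so the whole proof reduces to establishing this one-step decrease.

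To prove the decrease I would expand $D_k-D_{k+1}$ and handle its two pieces separately. For the $f$-difference, strong convexity of $f$ at $\X^{(k)}$ with subgradient $\Z^{(k)}$ yields $f(\X^{(k+1)})-f(\X^{(k)})+\langle\Z^{(k)},\X^{(k)}-\X^{(k+1)}\rangle\ge\frac12\norm{\X^{(k+1)}-\X^{(k)}}_F^2$. For the inner-product remainder, set $\delta^{(k)}=\Z^{(k+1)}-\Z^{(k)}=t\,\A^*(i(k))*_L r^{(k)}$, use feasibility $\A*_L\hat\X=\B$ and the first-dimension separability of Corollary~\ref{cor:refsep} to rewrite the residual as $r^{(k)}=\norm{\A(i(k))}_F^{-2}\,\A(i(k))*_L(\hat\X-\X^{(k)})$, and apply the t-product adjoint identity $\langle\A^*(i)*_L\mathcal U,\mathcal V\rangle=\langle\mathcal U,\A(i)*_L\mathcal V\rangle$. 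Splitting $\hat\X-\X^{(k+1)}=(\hat\X-\X^{(k)})-(\X^{(k+1)}-\X^{(k)})$ and completing the square (via $\frac12\norm{\X^{(k+1)}-\X^{(k)}}_F^2-\langle\delta^{(k)},\X^{(k+1)}-\X^{(k)}\rangle\ge-\frac12\norm{\delta^{(k)}}_F^2$) collapses everything to
\[
D_k-D_{k+1}\ \ge\ \frac{t\,\norm{\A(i(k))*_L(\hat\X-\X^{(k)})}_F^2}{\norm{\A(i(k))}_F^2}-\frac12\norm{\delta^{(k)}}_F^2.
\]

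The remaining, and most delicate, step is to control $\norm{\delta^{(k)}}_F^2=t^2\norm{\A^*(i(k))*_L r^{(k)}}_F^2$ by the same residual quantity. This is where the constant $\rho$ enters: I would invoke the norm bound $\norm{\A^*(i)*_L\W}_F^2\le\rho\,\norm{\A(i)}_F^2\,\norm{\W}_F^2$ (the $\rho$-scaling of the Frobenius norm under $L$ together with the facewise product, essentially \cite[Proposition 1]{henneberger2023log}), which gives $\norm{\delta^{(k)}}_F^2\le \rho\,t^2\,\norm{\A(i(k))}_F^{-2}\norm{\A(i(k))*_L(\hat\X-\X^{(k)})}_F^2$. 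Substituting yields precisely the factor $t(1-\frac{t\rho}{2})$, which the hypothesis $t<2/\rho$ renders strictly positive. I expect this transform-dependent norm inequality to be the main obstacle: the Bregman bookkeeping and the completion of the square are routine, but tracking how $L$ (with $U_{n_i}U_{n_i}^*=\rho I$) rescales $\norm{\cdot}_F$ and interacts with $*_L$ is where the real care is needed, and it is exactly what forces the stepsize restriction.

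Finally, for convergence I would use positivity of $t(1-\frac{t\rho}{2})$: the sequence $\{D_k\}$ is then nonincreasing and bounded below, hence convergent, and summing the decrements gives $\sum_k \norm{\A(i(k))*_L(\X^{(k)}-\hat\X)}_F^2/\norm{\A(i(k))}_F^2<\infty$, so these residuals tend to $0$. Since $D_k\le D_0$ bounds $\norm{\hat\X-\X^{(k)}}_F$, the iterates are bounded, and $\Z^{(k)}\in\partial f(\X^{(k)})$ keeps $\{\Z^{(k)}\}$ bounded as well. Under cyclic control every index $i\in[n_1]$ is selected once per cycle, so any subsequential limit $\X^\infty$ of $\{\X^{(k)}\}$ satisfies $\A(i)*_L(\X^\infty-\hat\X)=\mathcal O$ for all $i$, i.e. $\A*_L\X^\infty=\B$; combined with the invariance $\Z^{(k)}\in R(\A)$ and the limiting optimality condition, $\X^\infty$ satisfies the optimality system of \eqref{pth}, whose solution is unique by strong convexity, so $\X^\infty=\hat\X$. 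Along that subsequence $D_{k_j}\to 0$, and monotonicity of $\{D_k\}$ then forces $D_k\to 0$, whence $\frac12\norm{\hat\X-\X^{(k)}}_F^2\le D_k\to 0$ gives $\X^{(k)}\to\hat\X$. The one point needing care here is upgrading the subsequential limit to convergence of the full sequence, which the monotonicity of $D_k$ and uniqueness of the minimizer supply.
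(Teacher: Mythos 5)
Your proposal is correct, and its skeleton coincides with the paper's own argument: the paper's Appendix~\ref{app:a} proves exactly your sufficient-decrease inequality as Proposition~\ref{Prop1}, i.e.\ $D_{f,\Z^{(k+1)}}(\X^{(k+1)},\hat{\X})\leq D_{f,\Z^{(k)}}(\X^{(k)},\hat{\X})-\frac{t}{\|\A(i(k))\|_F^2}\bigl(1-\frac{t\rho}{2}\bigr)\|\B(i(k))-\A(i(k))*_L\X^{(k)}\|_F^2$, built from the same three ingredients you isolate: the pairing $\Z^{(k)}\in\partial f(\X^{(k)})$ (equivalently $\X^{(k)}=\nabla f^*(\Z^{(k)})$), the adjoint identity $\langle \A^*(i)*_L\W,\V\rangle=\langle\W,\A(i)*_L\V\rangle$, and the $\rho$-submultiplicativity bound \eqref{convlemma1}, which is precisely where the restriction $t<2/\rho$ enters. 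The genuine difference is in the bookkeeping. The paper runs the estimate in the dual: it uses the $1$-Lipschitz smoothness of $\nabla f^*$ implied by $1$-strong convexity of $f$, and in fact defers both that assembly and the entire convergence-of-iterates argument to \cite{chen2021regularized}. You stay in the primal, obtaining the same one-step decrease from strong convexity of $f$ at $\X^{(k)}$ plus Young's inequality (completing the square against $\frac12\|\X^{(k+1)}-\X^{(k)}\|_F^2$); the two routes are equivalent---the descent lemma for $f^*$ is proved by exactly this computation---but yours is self-contained, and you additionally spell out the convergence tail that the paper outsources.

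One detail in that tail needs a patch. Knowing that the residuals $\|\A(i(k))*_L(\X^{(k)}-\hat{\X})\|_F/\|\A(i(k))\|_F$ vanish only controls, at iteration $k$, the single slice $i(k)$ that was selected; your remark that ``every index is selected once per cycle'' does not by itself let you conclude that a subsequential limit $\X^\infty$ satisfies $\A(i)*_L(\X^\infty-\hat{\X})=\mathcal{O}$ for \emph{every} $i\in[n_1]$. You also need successive iterates within a cycle to coalesce. This is a one-line fix using the $1$-Lipschitz property of proximal operators invoked in the proof of Lemma~\ref{lem:sym}: $\|\X^{(k+1)}-\X^{(k)}\|_F\leq\|\Z^{(k+1)}-\Z^{(k)}\|_F\leq t\sqrt{\rho}\,\|\A(i(k))*_L(\X^{(k)}-\hat{\X})\|_F/\|\A(i(k))\|_F\to 0$, so iterates $\X^{(k_j)},\X^{(k_j+1)},\ldots,\X^{(k_j+n_1-1)}$ share the same limit and the constraint is recovered slice by slice. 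With that insertion your convergence argument is complete and, if anything, more explicit than the paper's.
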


Likewise, when the control sequence is chosen randomly, a linear convergence rate in expectation can be shown by adapting the proof \cite{chen2021regularized}. We assume that the probability of choosing index $i(k)=j$ from $[n_1]$ in Algorithm \eqref{alg:pth} is proportional to $\|\A(j)\|_F^2$ which is a customary choice in Kaczmarz literature \cite{strohmer2009randomized}. Thus for Algorithm \ref{alg:pth}, we have the following result.

\begin{theorem}\label{thm:randconv} Consider $f$ as defined in \eqref{eqn:feqn}. If $0<\frac{\nu t}{\|\A\|_F^2}(1-\frac{t \rho}{2})<1$, then there exists $\nu>0$ such that the sequence $\{\X^{(k)}\}$ generated by Algorithm~\ref{alg:pth} with random $i(k)$'s converges linearly to the solution $\hat{\X}$ in expectation,
\[\mathbb{E}\|\X^{(k)}-\hat{\X}\|_F^2\leq \|\hat{\X}-\X^{(0)}\|_F^2\left(1-\frac{\nu t}{\|\A\|_F^2}(1-\frac{t \rho}{2})\right)^k.\]
\end{theorem}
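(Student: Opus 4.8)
The plan is to track the Bregman distance induced by the strongly convex objective $f$ and to show that it contracts in expectation at every step, then transfer the contraction to the Frobenius distance. Write
\[
D^{(k)} := f(\hat{\X}) - f(\X^{(k)}) - \langle \Z^{(k)}, \hat{\X} - \X^{(k)}\rangle
\]
for the Bregman distance attached to the subgradient $\Z^{(k)}$. The identity $\X^{(k)} = \prox_{\lambda\|\cdot\|_1^p}(\Z^{(k)})$ guarantees $\Z^{(k)} \in \partial f(\X^{(k)})$, so $D^{(k)}$ is well defined and nonnegative, and a short induction on the dual update shows $\Z^{(k)} \in R(\A)$ for all $k$ once $\Z^{(0)} \in R(\A)$. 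Since $f$ is $1$-strongly convex (as noted just before the theorem), strong convexity gives $\tfrac12\|\hat{\X} - \X^{(k)}\|_F^2 \leq D^{(k)}$, so any geometric decay of $D^{(k)}$ produces the stated Frobenius estimate.

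First I would specialize the one-step estimate of Theorem~\ref{thm:cycconv} — whose derivation never uses how $i(k)$ is selected — to the feasible point $\X = \hat{\X}$. Reorganizing its three inner-product terms into consecutive Bregman distances collapses it into the purely deterministic inequality
\[
D^{(k+1)} \leq D^{(k)} - t\left(1 - \frac{t\rho}{2}\right)\frac{\|\A(i(k)) *_L (\X^{(k)} - \hat{\X})\|_F^2}{\|\A(i(k))\|_F^2}.
\]
Next I would take the conditional expectation over the random index $i(k)$, which is drawn with probability $\|\A(j)\|_F^2 / \|\A\|_F^2$. The per-slice normalization cancels, and by the first-dimension separability of Corollary~\ref{cor:refsep} the horizontal slices of $\A *_L (\X^{(k)} - \hat{\X})$ are exactly $\A(j) *_L (\X^{(k)} - \hat{\X})$, so $\sum_j \|\A(j) *_L (\X^{(k)} - \hat{\X})\|_F^2 = \|\A *_L (\X^{(k)} - \hat{\X})\|_F^2$. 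This reduces the conditional expectation to
\[
\mathbb{E}\bigl[D^{(k+1)} \mid \X^{(k)}\bigr] \leq D^{(k)} - \frac{t}{\|\A\|_F^2}\left(1 - \frac{t\rho}{2}\right)\|\A *_L (\X^{(k)} - \hat{\X})\|_F^2.
\]

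To turn this into a genuine contraction I would invoke a coercivity (restricted-eigenvalue) bound: there exists $\nu > 0$ with $\|\A *_L \W\|_F^2 \geq \nu\, D^{(k)}$ for the relevant iterate differences $\W = \X^{(k)} - \hat{\X}$. Here the invariants $\Z^{(k)} \in R(\A)$ and the hypothesis $\partial f(\hat{\X}) \cap R(\A) \neq \emptyset$ are essential, since they confine the pertinent primal directions to the subspace on which $\W \mapsto \A *_L \W$ is bounded below by its smallest nonzero $*_L$-singular value. Substituting yields $\mathbb{E}[D^{(k+1)} \mid \X^{(k)}] \leq \bigl(1 - \tfrac{\nu t}{\|\A\|_F^2}(1 - \tfrac{t\rho}{2})\bigr) D^{(k)}$; under the stated condition the factor lies in $(0,1)$. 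Taking total expectations and unrolling via the tower property gives $\mathbb{E}[D^{(k)}] \leq \bigl(1 - \tfrac{\nu t}{\|\A\|_F^2}(1 - \tfrac{t\rho}{2})\bigr)^k D^{(0)}$, after which $\tfrac12\|\X^{(k)} - \hat{\X}\|_F^2 \leq D^{(k)}$ together with the control of $D^{(0)}$ by the initialization delivers the claimed bound (with $\nu$ chosen so the constants align).

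The hard part will be the coercivity estimate that produces $\nu$. Unlike the matrix case, one must show that the $*_L$-based operator $\A *_L \cdot$ is bounded below on the subspace carrying the iterate differences; this requires pushing the row-space invariance through the linear transform $L$ by way of the block-diagonalization $\text{bdiag}$ and the facewise product $\Delta$, so that the relevant smallest nonzero singular value is extracted in the transform domain and correctly rescaled by the constant $\rho$ from Definition~\ref{def:tensors}. The remaining steps — the Bregman reorganization, the expectation collapse, and the tower-property unrolling — are routine.
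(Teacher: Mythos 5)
Your overall architecture matches the paper's proof exactly: specialize the one-step estimate of Theorem~\ref{thm:cycconv} at $\X=\hat{\X}$ and rewrite it as a decrease of the Bregman distance $D_{f,\Z^{(k)}}(\X^{(k)},\hat{\X})$; take conditional expectation under the sampling probabilities $\|\A(j)\|_F^2/\|\A\|_F^2$ so that the slice-wise residuals collapse to $\|\A*_L(\X^{(k)}-\hat{\X})\|_F^2/\|\A\|_F^2$; insert a coercivity bound producing $\nu$; unroll by the tower property; and convert back to the Frobenius norm via $1$-strong convexity. This is precisely the skeleton the paper uses in Appendices~\ref{app:b} and~\ref{app:c} (the block argument with $M=1$).

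However, your treatment of the coercivity step --- which you correctly flag as the hard part --- would not go through as described. You propose to obtain $\nu$ as a restricted smallest nonzero singular value of the operator $\W\mapsto\A*_L\W$, giving $\|\A*_L\W\|_F^2\geq\nu\,D^{(k)}$ for $\W=\X^{(k)}-\hat{\X}$. There are two obstructions. First, the inequality actually needed compares the residual with the \emph{Bregman distance}, not with $\|\W\|_F^2$; strong convexity gives $\frac{1}{2}\|\W\|_F^2\leq D^{(k)}$, which is the wrong direction, and for the nonsmooth $f$ of \eqref{eqn:feqn} the Bregman distance is \emph{not} bounded above by a constant multiple of $\|\W\|_F^2$: the $\ell_1^p$ part of $D_{f,\Z^{(k)}}(\X^{(k)},\hat{\X})$ can scale like $\|\X^{(k)}-\hat{\X}\|_1$ (first order) when the difference is small, so no bound of the form $\|\A*_L\W\|_F^2\geq c\|\W\|_F^2$ can yield $\|\A*_L\W\|_F^2\geq\nu D^{(k)}$ with a uniform $\nu$. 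Second, the subspace confinement you invoke applies to the \emph{dual} iterates: it is $\Z^{(k)}$ that remains in $R(\A)$, while the primal differences $\X^{(k)}-\hat{\X}=\prox_{\lambda\|\cdot\|_1^p}(\Z^{(k)})-\hat{\X}$ are nonlinear images of the dual variables and do not lie in any fixed subspace, so there is no ``smallest nonzero singular value on the relevant subspace'' to extract, with or without the $\mathrm{bdiag}$/facewise machinery. The paper closes this gap differently: it invokes Lemma~\ref{lemma:nu}, which asserts that $f$ is \emph{admissible} in the sense that its dual function $g_f$ is restricted strongly convex on level sets (following \cite{chen2021regularized}); this yields exactly $D_{f,\Z}(\X,\hat{\X})\leq\frac{1}{\nu}\|\A*_L(\X-\hat{\X})\|_F^2$ for all iterates whose Bregman distance does not exceed that of a reference point, a hypothesis satisfied along the trajectory precisely because the one-step inequality makes $D^{(k)}$ nonincreasing. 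A self-contained proof in your style would require establishing this restricted strong convexity property for $f=\lambda\|\cdot\|_1^p+\frac{1}{2}\|\cdot\|_F^2$; it does not follow from linear-algebraic considerations on $\A$ alone.
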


Note that all the aforementioned convergence results can be extended for Algorithm~\ref{alg:pthsing}.

\section{Extensions}\label{sec:exten}

\subsection{Block Kaczmarz} To enhance the performance of our algorithms for handling large-scale systems of equations, we introduce a modified version in this section, namely, the block variant. The block Kaczmarz method partitions the row indices of $\A$ into a predetermined number of distinct blocks. In the context of matrix-based Kaczmarz algorithms, employing block strategies that utilize distinct, non-overlapping indices has been shown to boost the efficiency of the traditional RKA method's convergence rates \cite{needell2014paved,needell2015randomized,necoara2019faster}. Using Theorem \ref{thm:cycconv}, a linear convergence rate in expectation can be shown for Algorithm \ref{alg:block}.

\begin{theorem}\label{thm:blockconv}
Consider the problem \eqref{pth}. If $0<\frac{\nu t}{M\|\A\|_F^2}(1-\frac{t \rho}{2})<1$, then $\X^{(k)}$  converges linearly to the solution $\hat{\X}$ in expectation:
    \begin{align*}
    \mathbb{E}\|\X^{(k)}-\hat{\X}\|_F^2
    &\leq  \|\hat{\X}-\X^{(0)}\|_F^2\left(1-\frac{\nu t}{M\|\A\|_F^2}(1-\frac{t \rho}{2})\right)^k.
\end{align*}
Note that this result relies on $M$, the number of blocks.
\end{theorem}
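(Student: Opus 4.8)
The plan is to follow the randomized argument of Theorem~\ref{thm:randconv}, replacing the single-slice projection by a block projection and inserting one extra averaging step over the random choice of block. Let $\mathcal{I}_1,\ldots,\mathcal{I}_M$ denote the partition of the row index set $[n_1]$ into $M$ blocks, and write $\A(\mathcal{I}_\tau)$ for the sub-tensor built from the horizontal slices $\{\A(i):i\in\mathcal{I}_\tau\}$. As before I track the Bregman distance induced by the $1$-strongly convex objective $f$ of \eqref{eqn:feqn},
\[
D^{(k)} := f(\hat{\X})-f(\X^{(k)})-\langle \Z^{(k)},\hat{\X}-\X^{(k)}\rangle, \qquad \Z^{(k)}\in\partial f(\X^{(k)}),
\]
where $\Z^{(k)}\in\partial f(\X^{(k)})$ follows from the optimality condition of the proximal step. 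The $1$-strong convexity of $f$ then gives the lower bound $\tfrac12\|\X^{(k)}-\hat{\X}\|_F^2\leq D^{(k)}$, which is the bridge between the residual decay below and the squared-norm statement of the theorem.

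First I would record the one-step descent for a block update. Repeating the computation behind Theorem~\ref{thm:cycconv}, but with the stacked block constraint $\A(\mathcal{I}_{\tau(k)})*_L\X=\B(\mathcal{I}_{\tau(k)})$ in place of a single slice, yields for $t<2/\rho$
\[
D^{(k+1)}\leq D^{(k)}-t\Bigl(1-\tfrac{t\rho}{2}\Bigr)\frac{\|\A(\mathcal{I}_{\tau(k)})*_L(\X^{(k)}-\hat{\X})\|_F^2}{\|\A(\mathcal{I}_{\tau(k)})\|_F^2}.
\]
The derivation is identical to the single-slice case once one uses the first-dimension separability of Corollary~\ref{cor:refsep}(1), which lets the block residual be assembled slice-by-slice.

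Next I would take the conditional expectation over $\tau(k)$, drawn uniformly from $\{1,\ldots,M\}$. Bounding each block norm by the global norm, $\|\A(\mathcal{I}_\tau)\|_F^2\leq\|\A\|_F^2$, and using that the blocks partition the rows so that $\sum_{\tau=1}^M\|\A(\mathcal{I}_\tau)*_L(\X^{(k)}-\hat{\X})\|_F^2=\|\A*_L(\X^{(k)}-\hat{\X})\|_F^2$, I obtain
\[
\mathbb{E}_{\tau(k)}\frac{\|\A(\mathcal{I}_{\tau(k)})*_L(\X^{(k)}-\hat{\X})\|_F^2}{\|\A(\mathcal{I}_{\tau(k)})\|_F^2}\;\geq\;\frac{1}{M\|\A\|_F^2}\,\|\A*_L(\X^{(k)}-\hat{\X})\|_F^2.
\]
This is exactly where the factor $M$ in the denominator of the contraction originates. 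Invoking the restricted lower bound $\|\A*_L(\X^{(k)}-\hat{\X})\|_F^2\geq\nu\|\X^{(k)}-\hat{\X}\|_F^2$ with the same $\nu>0$ as in Theorem~\ref{thm:randconv}, taking full expectations, and using $\tfrac12\|\X^{(k)}-\hat{\X}\|_F^2\leq D^{(k)}$ closes the recursion into $\mathbb{E}\,D^{(k+1)}\leq(1-c)\,\mathbb{E}\,D^{(k)}$ with $c=\frac{\nu t}{M\|\A\|_F^2}(1-\tfrac{t\rho}{2})$; iterating and applying the same strong-convexity conversion as in Theorem~\ref{thm:randconv} produces the claimed geometric decay of $\mathbb{E}\|\X^{(k)}-\hat{\X}\|_F^2$.

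I expect the main obstacle to be the restricted lower bound by $\nu$: one must verify that every error $\X^{(k)}-\hat{\X}$ remains in the subspace on which the map $\X\mapsto\A*_L\X$ is bounded below (guaranteed by the initialization $\Z^{(0)}\in R(\A)$ and the fact that the block updates keep the dual iterates in $R(\A)$), and then transport this injectivity through the $*_L$ product so that $\nu$ can be taken uniformly in $k$ and the same constant survives the block partition. By contrast, the block averaging itself is routine once the per-block descent and the partition identity are in hand; the only mild care needed there is the inequality $\|\A(\mathcal{I}_\tau)\|_F^2\leq\|\A\|_F^2$, which both produces and explains the dependence on the number of blocks $M$.
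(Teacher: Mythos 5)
Your proposal follows the same architecture as the paper's proof in Appendix~C: a per-block Bregman descent inequality inherited from Theorem~\ref{thm:cycconv}, a conditional expectation over the block choice using the partition identity together with $\norm{\A(\tau)}_F^2\leq\norm{\A}_F^2$ (this is indeed where the factor $1/M$ comes from, and your version of this step, stated as an inequality under uniform sampling, is if anything more careful than the paper's), and finally iteration plus a strong-convexity conversion. The genuine gap is in how you close the recursion. You take $\nu$ to be a restricted-injectivity constant, $\norm{\A*_L(\X^{(k)}-\hat{\X})}_F^2\geq\nu\norm{\X^{(k)}-\hat{\X}}_F^2$, and then invoke $\tfrac12\norm{\X^{(k)}-\hat{\X}}_F^2\leq D^{(k)}$. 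These two facts cannot be chained: both bound quantities \emph{from below} by $\norm{\X^{(k)}-\hat{\X}}_F^2$, whereas to pass from
\[
\mathbb{E}_c\,D^{(k+1)}\;\leq\; D^{(k)}-\frac{t\nu}{M\norm{\A}_F^2}\Bigl(1-\frac{t\rho}{2}\Bigr)\norm{\X^{(k)}-\hat{\X}}_F^2
\]
to $\mathbb{E}_c\,D^{(k+1)}\leq(1-c)D^{(k)}$ you need the \emph{upper} bound $D^{(k)}\leq C\norm{\X^{(k)}-\hat{\X}}_F^2$. Strong convexity of $f$ gives exactly the opposite inequality; an upper quadratic bound on a Bregman distance is a smoothness property, and it fails for the objective \eqref{eqn:feqn} because of the nonsmooth $\ell_1^p$ term (near a kink, $D_{f,\Z}(\X,\hat{\X})$ can grow linearly in $\norm{\hat{\X}-\X}_F$ while $\norm{\hat{\X}-\X}_F^2$ is quadratically small). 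Moreover, in the underdetermined recovery setting that motivates \eqref{pth}, the map $\X\mapsto\A*_L\X$ has a nontrivial null space, so the uniform bound $\norm{\A*_L(\X-\hat{\X})}_F^2\geq\nu\norm{\X-\hat{\X}}_F^2$ you posit need not hold for any $\nu>0$; this is precisely the obstacle you flag at the end, and it is not repaired by tracking $R(\A)$, since only the dual iterates $\Z^{(k)}$ live in $R(\A)$, not the primal errors $\X^{(k)}-\hat{\X}$.

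The ingredient the paper uses instead is Lemma~\ref{lemma:nu}: under the admissibility of $f$ (restricted strong convexity of the dual function $g_f$), the Bregman distance itself is controlled by the residual,
\[
D_{f,\Z^{(k)}}(\X^{(k)},\hat{\X})\;\leq\;\frac{1}{\nu}\norm{\A*_L(\X^{(k)}-\hat{\X})}_F^2,
\]
and this is applicable along the whole trajectory because the one-step descent inequality makes $k\mapsto D_{f,\Z^{(k)}}(\X^{(k)},\hat{\X})$ nonincreasing, so the level-set hypothesis of that lemma is automatically satisfied; no injectivity of $\A*_L$ and no subspace argument is needed. Substituting this bound (in place of your injectivity-plus-strong-convexity chain) into your averaged descent inequality closes the recursion with $c=\frac{\nu t}{M\norm{\A}_F^2}\bigl(1-\frac{t\rho}{2}\bigr)$, and the remainder of your argument then proceeds exactly as in the paper.
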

\begin{remark}
    The proof of Theorem \ref{thm:blockconv} follows as in \cite[Theorem 4]{henneberger2023log}.
\end{remark}

\begin{algorithm}
\caption{Block randomized regularized Kaczmarz for tensor recovery with $L$-based t-product}\label{alg:block}
\begin{algorithmic}
\State\textbf{Input:} $\B\in\R^{n_1\times l\times n_3\times ...\times n_m}$, $\A\in\R^{n_1\times n_2\times n_3\times...\times n_m}$, stepsize $t$, max number of iterations $T$, partition $P = \{\tau_1,...,\tau_M\}$ of $[n_1]$, invertible linear transform $L$, and tolerance $tol$.
\State\textbf{Output:} An approximation of $\hat{\X}$
\State\textbf{Initialize:} $\Z^{(0)}\in R(\A)\subset \R^{n_2\times l\times n_3\times ...\times n_m}, \X^{(0)} = \nabla f^*(\Z^{(0)})$
\For{$k=0,...,T-1$}
\State choose a block $\tau_{k+1}$ cyclically or randomly from $P$
\State $\Z^{(k+1)} = \Z^{(k)}+t\A^*(\tau_k)*_L\frac{\B(\tau_k)-(\A*_L\X^{(k)})(\tau_k)}{||\A(\tau_k)||_F^2}   $
\State $\X^{(k+1)} = \nabla f^*(\Z^{(k+1)}) $
\State {Terminate if }$||\X^{(k+1)}-\X^{(k)}||_F/||\X^{(k)}||_F<tol$
\EndFor
\end{algorithmic}
\end{algorithm}

\subsection{Accelerated Kaczmarz}
In the Kaczmarz algorithm, the step size $t$ can be either a predefined constant or determined adaptively during iterations. Adaptive methods for setting $t$ include line search, trust region methods, or the Barzilai-Borwein method \cite{barzilai1988two}. Furthermore, the Nesterov acceleration technique \cite{nesterov1983method} has been effectively incorporated into gradient descent algorithms. This technique, known as Nesterov Accelerated Gradient Descent (NAGD), is notable for its enhanced convergence rate of $O(1/N^2)$ compared to the typical $O(1/N)$ rate of gradient descent. To integrate this acceleration technique into our algorithm at the k-th iteration, the step size $t$ is updated according to Algorithm \ref{alg:accel}, starting with an initial $\gamma_0=1$ and update $\Z$ as a linear combination of the current and previous steps, weighted by $\eta_k$.

\begin{algorithm}
\caption{Accelerated block randomized regularized Kaczmarz for tensor recovery with new t-product}\label{alg:accel}
\begin{algorithmic}
\State\textbf{Input:} $\B\in\R^{n_1\times l\times n_3\times ...\times n_m}$, $\A\in\R^{n_1\times n_2\times n_3\times...\times n_m}$, step size $t$, max number of iterations $T$, partition $P = \{\tau_1,...,\tau_m\}$ of $[n_1]$, invertible linear transform $L$, and tolerance $tol$.
\State\textbf{Output:} An approximation of $\hat{\X}$
\State\textbf{Initialize:} $\Z^{(0)}\in R(\A)\subset \R^{n_2\times l\times n_3\times ...\times n_m}, \X^{(0)} = \nabla f^*(\Z^{(0)}), \gamma_0=1$
\For{$k=1,...,T-1$}
\State choose a block $\tau_k$ cyclically or randomly from $P$
\State $\gamma_{k+1} = \frac{1+\sqrt{1+4\gamma_k^2}}{2}$
\State $\eta_{k+1} = \frac{1-\gamma_k}{\gamma_{k+1}}$
\State $\hat{\Z}^{(k+1)} = \Z^{(k)}+t\A^*(\tau_k)*_L\frac{\B(\tau_k)-(\A*_L\X^{(k)})(\tau_k)}{||\A(\tau_k)||_F^2}   $
\State $\Z^{(k+1)} = (1-\eta_{k+1})\hat{\Z}^{(k+1)} + \eta_{k+1}\hat{\Z}^{(k)}$
\State $\X^{(k+1)} = \nabla f^*(\Z^{(k+1)}) $
\State {Terminate if }$||\X^{(k+1)}-\X^{(k)}||_F/||\X^{(k)}||_F<tol$
\EndFor
\end{algorithmic}
\end{algorithm}

\section{Numerical Experiments}\label{sec:num}

In this section, we illustrate the success of our proposed algorithms on synthetic and real-world data for sparse and low-rank high-order tensor recovery applications. Our numerical experiments utilize several quantitative metrics to assess the performance of different tensor recovery algorithms:
\begin{enumerate}
    \item The relative error (RE) at each iteration is defined as
 \begin{align}
    \mbox{RE}(\X^{(k)},\hat{\X})= \frac{\|\hat{\X}-\X^{(k)}\|_F}{\|\hat{\X}\|_F},\label{relative error}
 \end{align}
 where $\X^{(k)}$ represents the estimation of the ground truth tensor $\hat{\X}$ at iteration $k$.
 \item The Peak Signal-to-Noise Ratio (PSNR)\cite{qin2022low}  is computed as
\[\text{PSNR} = 10 \log_{10}(n_1\times n_2\times \cdots \times n_m\|\X\|_\infty^2/\|\hat{\X}-\X\|_F^2),\]
where $\X$ is the original tensor of size $n_1\times n_2\times ...\times n_m$ and $\hat{\X}$ is the approximated tensor. This formula extends the conventional PSNR used in image processing.
\item The Structural Similarity Index Measure (SSIM) \cite{wang2004image} calculates feature similarity by combining similarity measures for luminance, contrast, and structure.
\item The Feature Similarity Index Measure (FSIM) \cite{zhang2011fsim} uses phase congruency and gradient magnitude to characterize local image quality.
\end{enumerate}

For the synthetic experiments, we use the relative error \eqref{relative error} for performance evaluation. However, for real-world data experiments, we incorporate all four metrics. All experiments were conducted using MATLAB 2023b on a desktop computer with Intel CPU i7-1065G7 CPU, 12GB RAM, running Windows 11. The demo codes for our algorithms are available at \hyperlink{https://github.com/khenneberger}{https://github.com/khenneberger}.

\subsection{Synthetic Numerical Experiments}
 In the following numerical experiments we apply the regularized Kaczmarz algorithm with blocking. The number of blocks, $M$, is preassigned and the row indices $[n_1]$ are partitioned into $M$ blocks at the start of the algorithm. In this case, a different index block $i(k)$ is chosen at each iteration $k$. Our choice of indexing can be cyclic or randomized. Since the number of blocks effects convergence, we may tune the block number to find the optimal parameter.

For the following synthetic experiments we consider the general minimization problem \eqref{pth} where the tensors $\A\in\R^{50\times 2\times 50\times 50}$ and $\X\in\R^{2\times 50\times 50\times 50}$ are randomly generated. Throughout this section we set the parameters as $M= 3, \lambda = 0.001$, $t=1$, and $L=$ \verb 'fft' unless otherwise specified.

\begin{figure}
\setlength{\tabcolsep}{2pt}
    \centering
     \begin{tabular}{c c}
     \includegraphics[width = .49\textwidth]{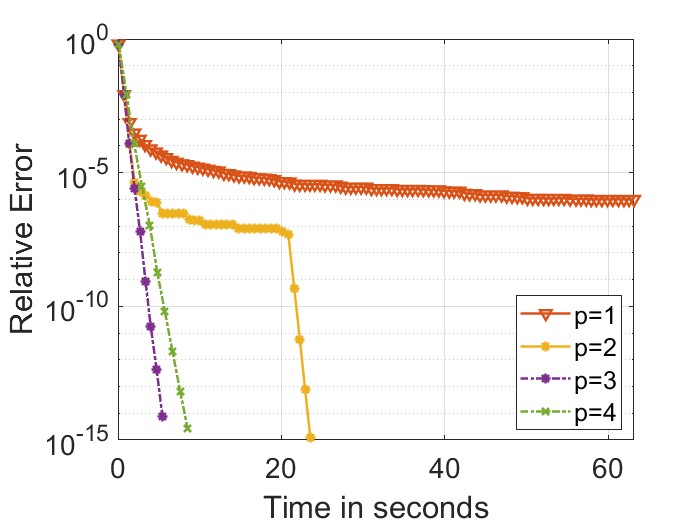}  & \includegraphics[width = .49\textwidth]{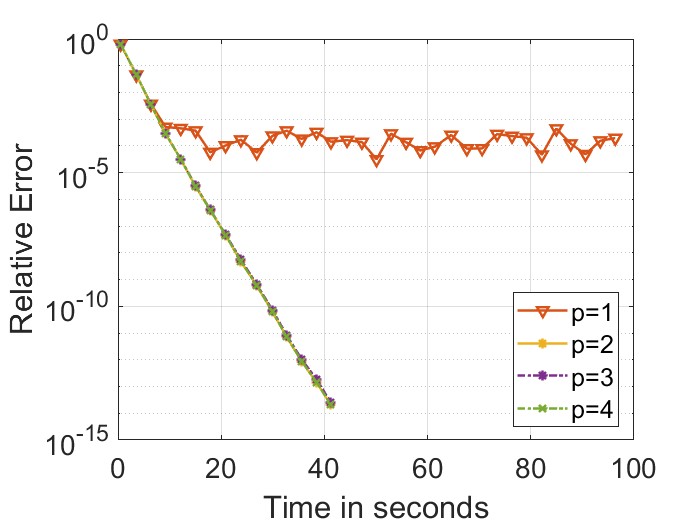} \\
        L1PK-S (Alg.~\ref{alg:pth}) &  L1PK-L (Alg.~\ref{alg:pthsing})
     \end{tabular}
     \caption{Convergence for different $p$ values. The sparse data is created with a sampling rate of $80\%$}
     \label{fig:pthcomp}
 \end{figure}
First, we compare the convergence of our two proposed Algorithms \ref{alg:pth} and \ref{alg:pthsing} for different powers of $p$. Figure \ref{fig:pthcomp} plots the relative error curves produced by using the $p=\{1,2,3,4\}$. Next, we compare the convergence of the proposed Algorithms \ref{alg:pth} and \ref{alg:pthsing}  with different linear transforms: the Fast Fourier Transform (FFT), the Discrete Cosine Transform (DCT), and the Discrete Wavelet Transform (DWT) with the Daubechies 5 wavelet `\verb"db5"'.  Figure \ref{fig:lintrans} plots the relative error curves produced by using the linear transforms FFT, DCT, and DWT. For this synthetic data with the aforementioned parameters, we fix the $\rho$ values as 1 for the first plot and as $n_3\cdot n_4 = 2500$ for the second plot. When $\rho=1$ this method is slower to converge than when $\rho = n_3\cdot n_4 = 2500$.
 \begin{figure}
 \setlength{\tabcolsep}{2pt}
     \centering
     \begin{tabular}{c c}
        \includegraphics[width = .49\textwidth]{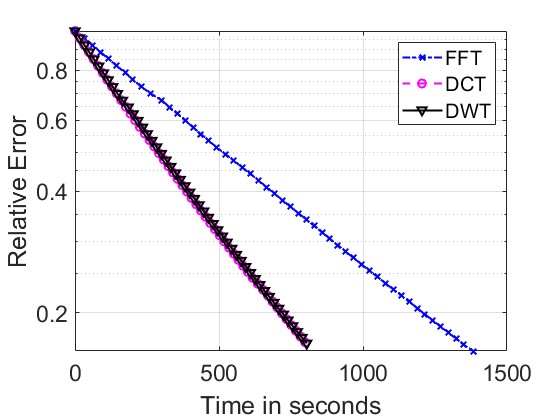}  &  \includegraphics[width = .49\textwidth]{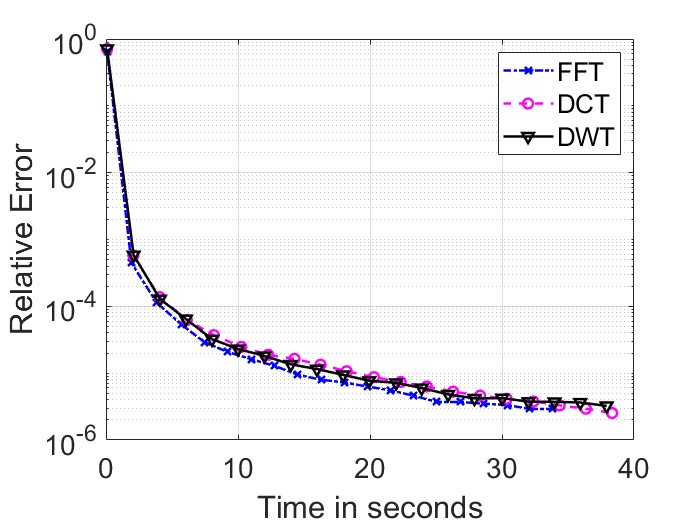} \\
       L1PK-S (Alg.~ \ref{alg:pth}) with $\rho = 1$ &  L1PK-S (Alg.~ \ref{alg:pth}) with $\rho = n_3\cdot n_4$
     \end{tabular}
     \caption{Convergence under different linear transforms}
     \label{fig:lintrans}
 \end{figure}

We also compare the convergence of our algorithms with different block sizes. Figure \ref{fig:blocksize} plots the relative error curves versus the time in seconds for our two algorithms where blocking is performed cyclically.

\begin{figure}[H]
 \setlength{\tabcolsep}{2pt}
     \centering
     \begin{tabular}{c c}
        \includegraphics[width = .49\textwidth]{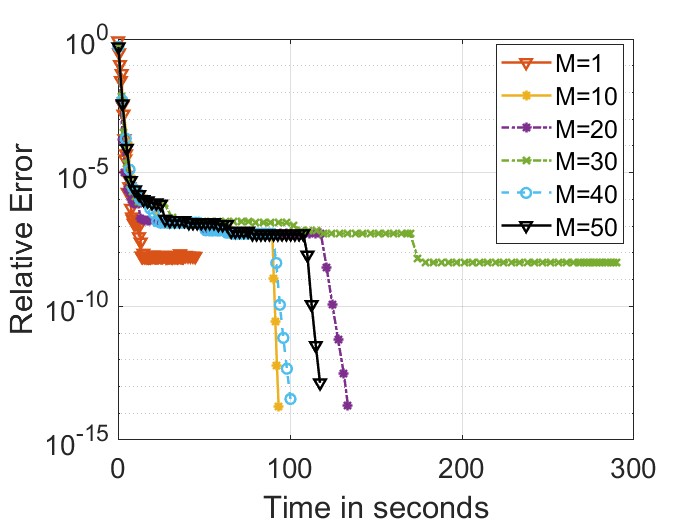}
        &  \includegraphics[width = .49\textwidth]{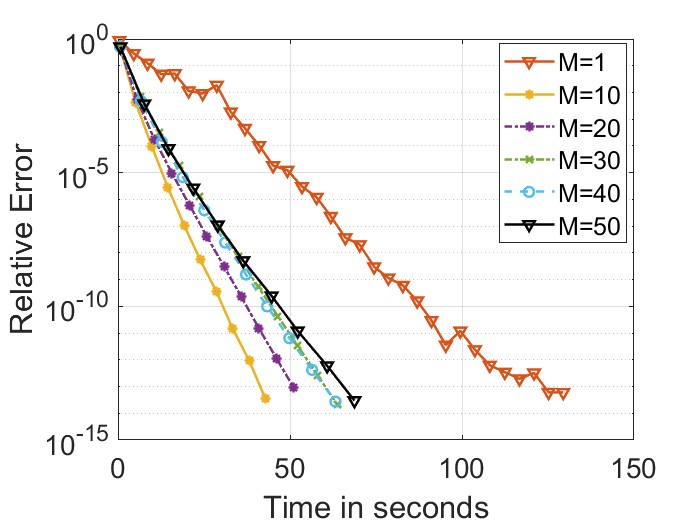} \\
       L1PK-S (Alg.~ \ref{alg:pth}) &  L1PK-L (Alg.~ \ref{alg:pthsing})
     \end{tabular}
     \caption{Convergence with different block size $M$. }
     \label{fig:blocksize}
 \end{figure}

Our proposed block algorithm \ref{alg:pth} consistently yields stable results across various conditions. However, the accelerated algorithm \ref{alg:accel} demonstrates superior performance under specific circumstances. Notably, when the lambda value is small and the block size is large, the accelerated algorithm proves to be more advantageous. This benefit is depicted in Figure \ref{fig:accel}, which presents a comparison of relative error curves against elapsed time for both algorithms.

Additionally, an alternative version of Nesterov acceleration was examined. This variation involved updating the momentum as $k/(k+3)$ at each iteration $k$. Despite its novelty, this second version of the accelerated algorithm did not generally surpass the other methods. It only showed improved performance over other versions with the parameters $p=2$ and block size $M=2$, and when $\lambda$ is small.

\begin{figure}
     \centering
        \includegraphics[width = .49\textwidth]{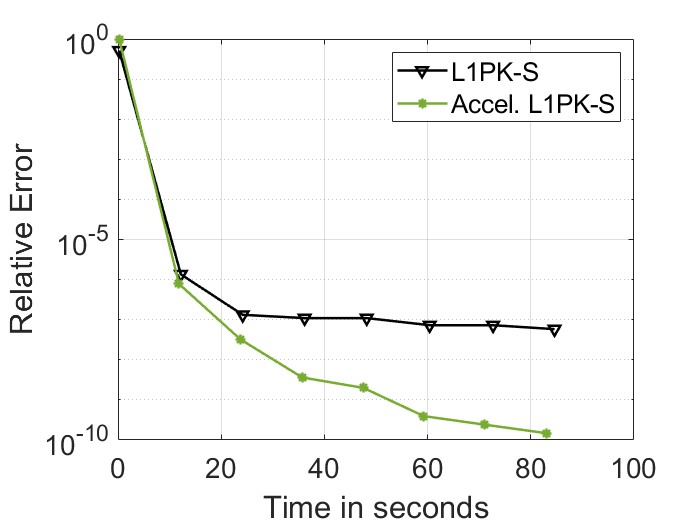}
     \caption{Accelerated versus regular convergence with parameters: $t = 1, \lambda = 0.001, p= 2$, block size $M= 25$.}
     \label{fig:accel}
 \end{figure}

\subsection{Real data experiments}
In this subsection, we illustrate the performance of the proposed
algorithms in two real-world applications: image destriping and color video deblurring. We analyze the performance of our algorithms in comparison to other state-of-the-art methods using the relative error, PSNR, SSIM, and FSIM metrics previously defined.
\subsubsection{Image sequence destriping}
In this experiment we consider a low-rank image destriping problem. Striping is a common issue in scientific imaging, thus image destriping aims to remove stripes or streaks from the visual data. We consider Extended YaleFace Dataset B \cite{georghiades2001few}, a 4th order dataset of 38 subjects photographed under different illumination conditions. As in \cite{qin2022low} we consider a subset of the original data such that our tensor is of size $48\times 42\times 64\times 38$. This includes $38$ subjects under $64$ different illuminations where each image is $48\times 42$ pixels.

We choose the tensor $\A$ to be facewise diagonal with all diagonal entries equal to one except in the rows which will establish the striping effect.
Moreover, we set the values of the diagonal entries  where we wish to create a stripe as $0.01$. After parameter tuning, we obtain the optimal parameters $\lambda = 0.001, t = 1, p=1$ and $ \beta = 1$. Figure \ref{fig:destripe} shows the results on three selected images, demonstrating both the quantitative and qualitative success of this algorithm.

Table \ref{tab:destripecomp} details the quantitative comparison of our method with two state of the art tensor completion algorithms: HTNN-FFT \cite{qin2022low} and t-CTV \cite{wang2023guaranteed}. Notably, our proposed method performs better in terms of PSNR and relative error.

\begin{figure}[ht]
    \centering
    \setlength{\tabcolsep}{2pt} 
    \begin{tabular}{ccccc}
        Original & Observed &  t-CTV& HTNN-FFT & Proposed \\
        {\includegraphics[width = 0.17\textwidth]{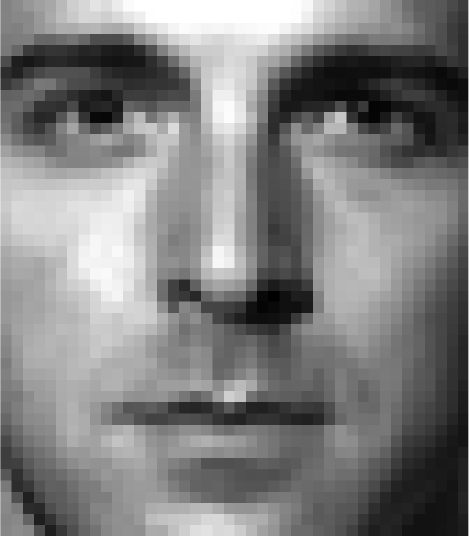}}
        & {\includegraphics[width = 0.17\textwidth]{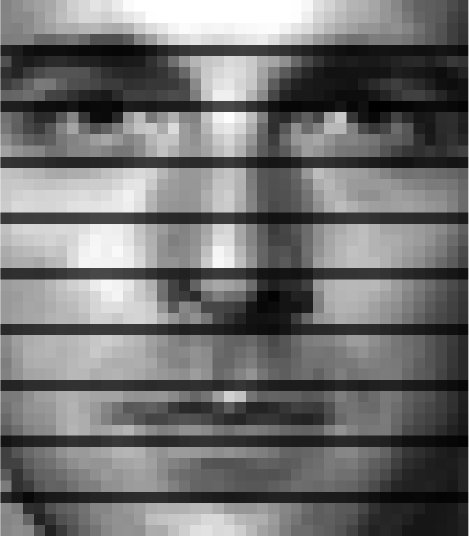}}
        &{\includegraphics[width = 0.17\textwidth]{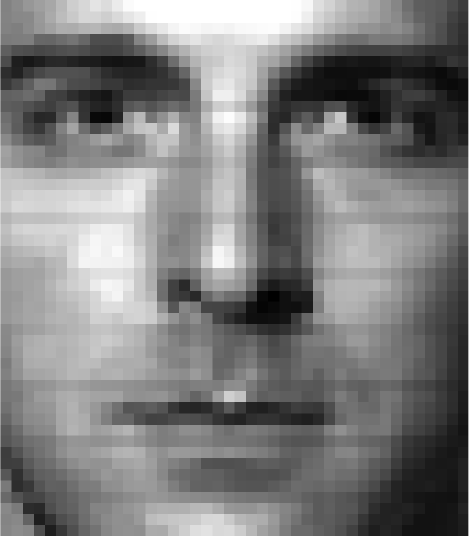}}
        & {\includegraphics[width = 0.17\textwidth]{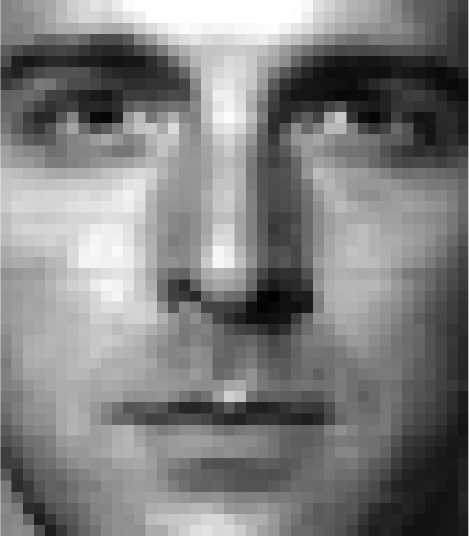}}
        & {\includegraphics[width = 0.17\textwidth]{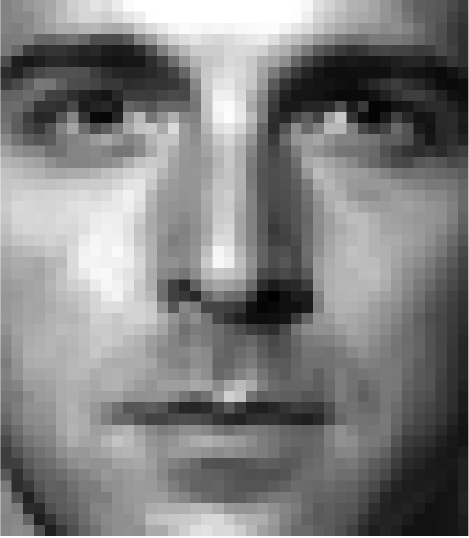}} \\
      {\includegraphics[width = 0.17\textwidth]{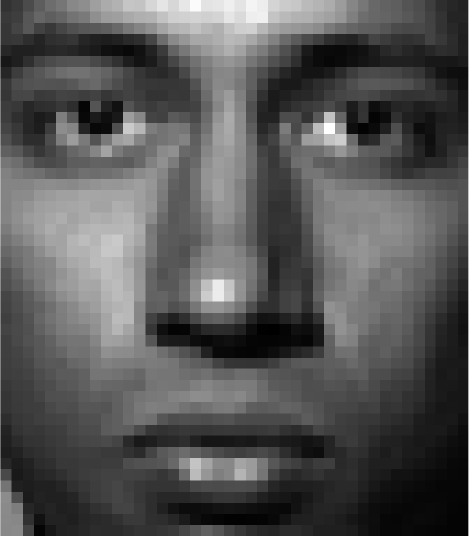}}
      & {\includegraphics[width = 0.17\textwidth]{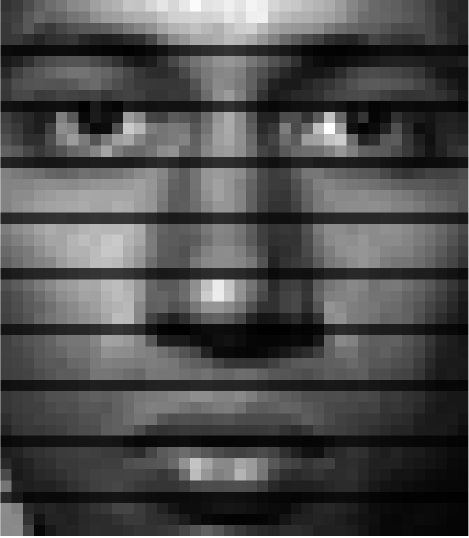}}
      &  {\includegraphics[width = 0.17\textwidth]{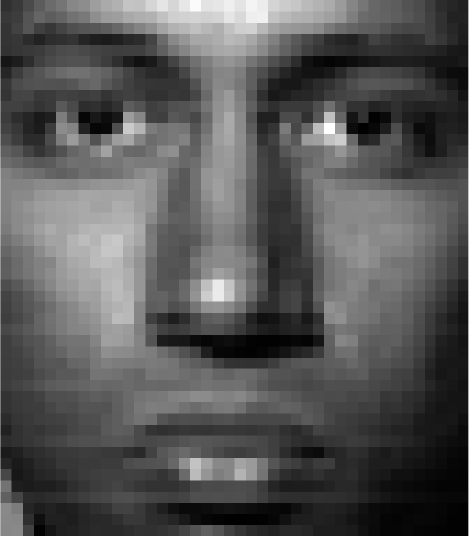}}
      & {\includegraphics[width = 0.17\textwidth]{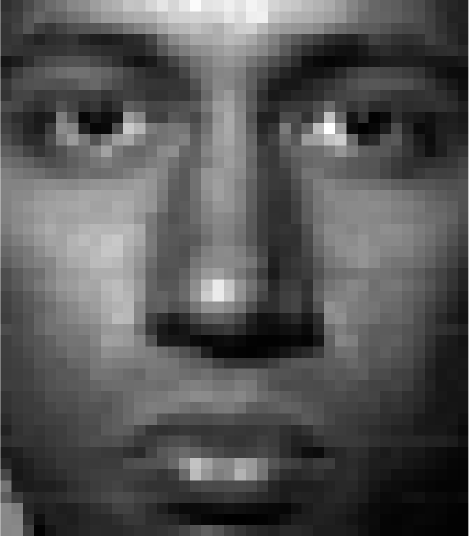}}
      &{\includegraphics[width = 0.17\textwidth]{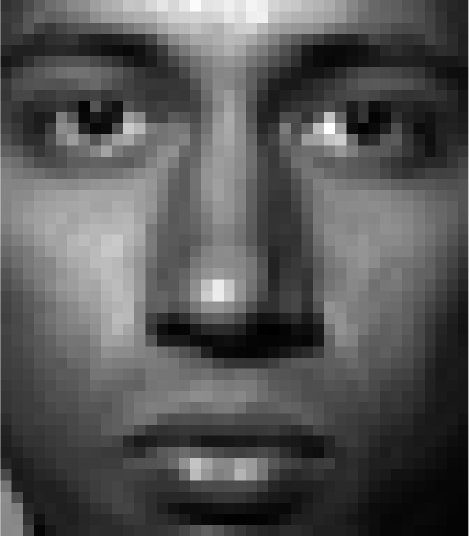}}\\
     {\includegraphics[width = 0.17\textwidth]{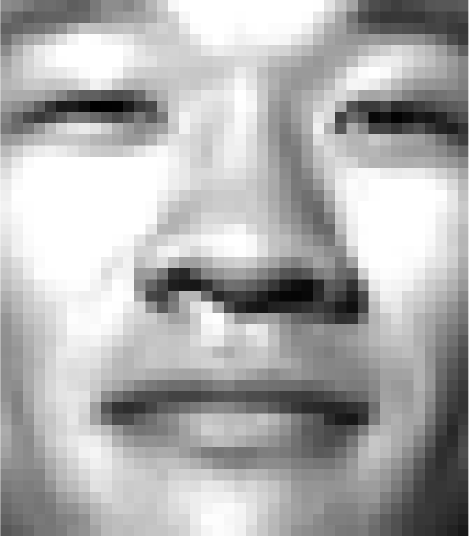}}
     & {\includegraphics[width = 0.17\textwidth]{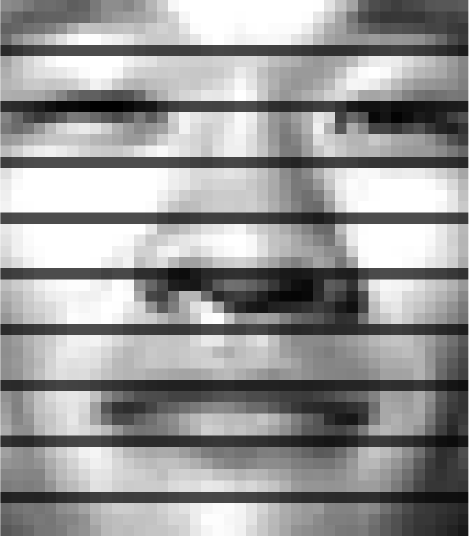}}
     & {\includegraphics[width = 0.17\textwidth]{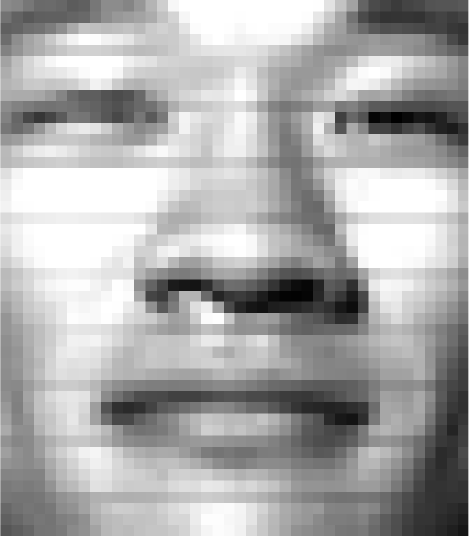}}
     & {\includegraphics[width = 0.17\textwidth]{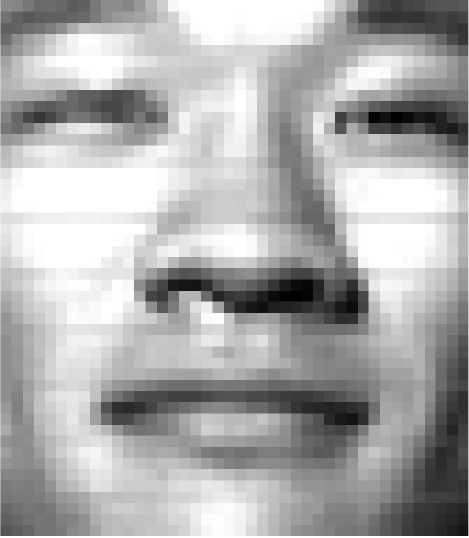}}
     &{\includegraphics[width = 0.17\textwidth]{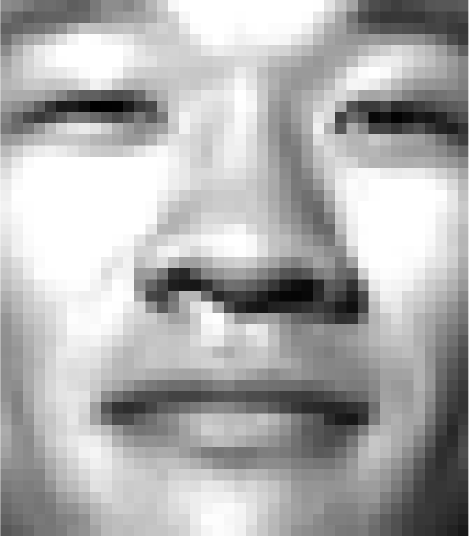}}\\
     & & \scriptsize RE $=0.07591$ & \scriptsize RE $= 0.0542$ & \scriptsize RE $= 8.8652\times 10^{-4}$
        \end{tabular}
        \vspace{-5pt}
        \caption{Visual results where every 5th row is a stripe}
        \label{fig:destripe}
\end{figure}

\begin{table}[H]
    \centering
    \begin{tabular}{|c|c|c|c|c|c|}
\hline
     \textbf{Method} & \textbf{PSNR}  & \textbf{SSIM}  & \textbf{FSIM} & \textbf{RE} & \textbf{time (sec)} \\
     \hline\hline
         t-CTV     &     $31.12$  &  $0.9496$ &   $0.9729$  & $0.07590$  &$266.99$\\
       \hline
      HTNN-FFT  & $34.11$ &$0.9646$  &  $0.9802$   &  $0.05385$& $82.7124$\\
      \hline
       Proposed & $\bf 77.38$   & $\bf 0.9999$  & $\bf{0.9999}$ & $\bf 3.694\times 10^{-4}$ &  $ \textbf{73.3927}$\\
   \hline
\end{tabular}
    \caption{Performance Comparison for $4$th order tensor recovery with row sampling}
    \label{tab:destripecomp}
\end{table}

\subsubsection{4D Video Deconvolution} Consider a third-order tensor $\X\in\R^{n_1\times p\times m_1}$ and a point spread function $H\in \R^{m_2\times n_2}$. Let $m = m_1+m_2-1$ and $n = n_1+n_2-1$. By applying zero padding we may extend $\X$ and $H$ such that they are of size $n_1\times p\times m$ and $m\times n$ respectively. Then we create a doubly block circulant tensor $\A$ of size $n\times n_1\times m$.
Then the relationship between the third-order tensor product and convolution has been established in \cite{chen2021regularized} as
\[
    H\circledast \X = \text{fold}(\text{circ}(\A)\text{unfold}(\X)) = \A*\X. \label{3dconvprod}
\]
Next we establish the equivalence between the t-product and convolution with a 4D tensor. Consider $H$ from before and $\X\in\R^{n_1\times p\times m_1\times q}$. We wish to create $\A$ of size $n\times n_1\times m\times q$ such that $H\circledast \X = \A*\X$. Reducing this to a third order problem, consider $\hat{\X}\in \R^{(n_1 q)\times p\times m}$ obtained by applying ``unfold" to zero-padded $\X$, and consider $\hat{\A}\in \R^{(n q)\times (n_1 q)\times m}$ the doubly block circulant tensor.
Then
\[\begin{aligned}
    \text{fold}(H\circledast \hat{\X}) &\overset{\text{\eqref{3dconvprod}}}{=} \text{fold}( \underbrace{\hat{\A}*\hat{\X}}_{\text{3rd order}}) \overset{\text{ \eqref{convolution}}}{=} \underbrace{\text{circ}^{-1}(\hat{\A})*\X}_{\text{4th order}}.
\end{aligned}\]

Thus the desired $\A =\text{circ}^{-1}(\hat{\A}) $ and is of size $n\times n_1\times m\times q$. To recover $\X$ we consider the low-rank recovery model \eqref{pthsing} where $\B$ of size $ n\times p\times m\times q$ is the tensor version of the observed blurry data.

We now consider a color video recovery problem using a video sequence from the publicly available YUV\footnote{\hyperlink{https://media.xiph.org/video/derf/}{https://media.xiph.org/video/derf/}} database. The benchmark color video is of size $144\times 173\times 3\times 300$. For our experiment we trim the video and reorder the dimensions to size $173\times 3\times 148\times 100$, such that the video consists of $100$ frames, $3$ color channels, and each frame is of size $148\times 173$ pixels with zero padding.  Each blurry image is generated by convolving the ground truth with a Gaussian convolution kernel of size $5\times 5$ with standard deviation $1$. We convolve all frames with the same kernel in its extended tensor form $\A$ of size ${177\times 173\times 148\times 100}$. In this case, the resulting blurry video obtained from the t-product $\A*\X$ is the same as the blurry video obtained by applying the MATLAB function \verb|conv2| to each frame of $\X$.

Figure \ref{fig:blurimgs} presents a comparative analysis of the recovery for the first four frames of the original video using our proposed accelerated method from Algorithm \ref{alg:accel}. This comparison encompasses several methodologies: the Total Variation (TV) image deblurring technique implemented using the Bregman cookbook\footnote{\hyperlink{https://www.mathworks.com/matlabcentral/fileexchange/35462-bregman-cookbook}{https://www.mathworks.com/matlabcentral/fileexchange/35462-bregman-cookbook}} \cite{gilles2011bregman}, the IRCNN method\footnote{\hyperlink{https://github.com/cszn/DPIR}{https://github.com/cszn/DPIR}} \cite{zhang2021plug}, and the Lucy-Richardson (LR) approach \cite{richardson1972bayesian,lucy1974iterative} implemented using the MATLAB function \verb|deconvlucy|. To ensure a fair comparison, the TV image deblurring method includes a preprocessing step where the border of each video frame is padded. We also note that the TV and IRCNN image deblurring methods are inherently three-dimensional, and thus applied to each color video frame. In contrast, the Lucy-Richardson method, being two-dimensional, was executed on each frame and across all color channels for the recovery process.

\begin{figure}[ht]
        \centering
    \setlength{\tabcolsep}{2pt}
       \begin{tabular}{cccccc}
       Original & Observed  & LR & IRCNN & TV & Proposed \\
       {\includegraphics[width = 0.15\textwidth]{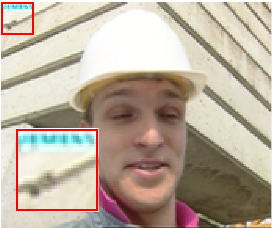}}
       &{\includegraphics[width = 0.15\textwidth]{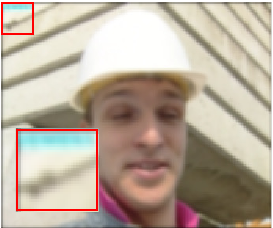}} &
        {\includegraphics[width = 0.15\textwidth]{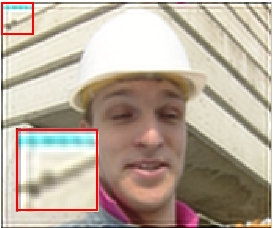}}&
       {\includegraphics[width = 0.15\textwidth]{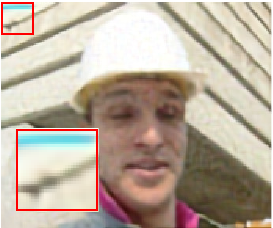}}&
       {\includegraphics[width = 0.15\textwidth]{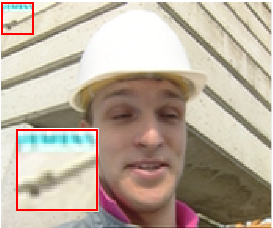}}&
       {\includegraphics[width = 0.15\textwidth]{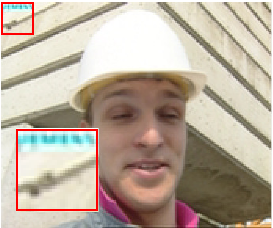}}\\
 {\includegraphics[width = 0.15\textwidth]{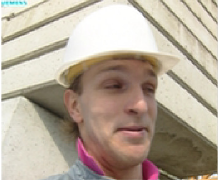}}
 &{\includegraphics[width = 0.15\textwidth]{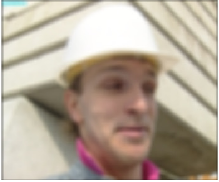}}  &
  {\includegraphics[width = 0.15\textwidth]{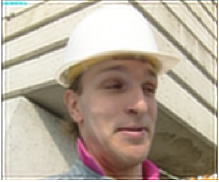}}
  &{\includegraphics[width = 0.15\textwidth]{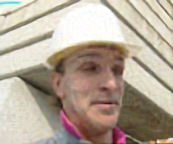}}&
       {\includegraphics[width = 0.15\textwidth]{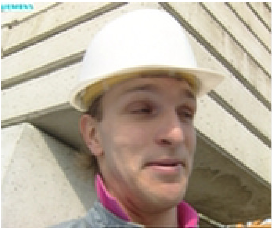}}
       &{\includegraphics[width = 0.15\textwidth]{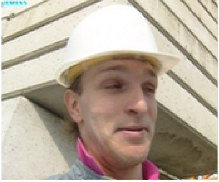}}
        \end{tabular}
        \vspace{-5pt}
        \caption{ Color video deblurring}
        \label{fig:blurimgs}
\end{figure}

\begin{table}[H]
    \centering
    \begin{tabular}{|c|c|c|c|c|}
\hline
     \textbf{Method} & \textbf{PSNR}  & \textbf{SSIM}  & \textbf{FSIM} & \textbf{RE} \\
     \hline\hline
      LR & $26.04$   & $0.9578$  & $0.9184$ & $ 0.0987$ \\
     \hline
      IRCNN & $29.87$   & $0.8804$  & $0.9065$ & $ 0.0458$ \\
     \hline
     TV & $45.67$   & $0.9968$  & $0.9984$ & $ 0.0074$ \\
      \hline
      Proposed  & $\boldsymbol{50.64 }$  & $\boldsymbol{0.9984}$  & $\boldsymbol{0.9996}$ & $\boldsymbol{0.0043}$ \\
      \hline
\end{tabular}
    \caption{Performance comparison for color video deblurring}
    \label{tab:deblurcomp}
\end{table}

\subsection{Discussion}
\paragraph{Parameter Selection} In the analysis of both synthetic and real-world datasets detailed previously, parameter optimization was undertaken through a systematic grid search approach, for the parameters $t$, $\lambda$, $M$, and $p$ such that the assumptions of Theorems \ref{thm:cycconv} and \ref{thm:randconv} hold. As demonstrated in the synthetic experiments, $p=2$ outperforms other $p$ values. It was observed that for the real-world datasets evaluated, a relatively small $\lambda$ value, specifically $\lambda=0.001$, yielded optimal results. It is important to highlight the observed trade-off between computational time and convergence rate in the context of block size selection. This trade-off emphasizes the importance of judicious block size selection to effectively manage the balance between computational demand and the rate of applying the model efficiently. Moreover, using the FFT as the linear transformation $L$ resulted in the best convergence outcomes. Generally, irrespective of the choice of other parameters, a block size of 1 suggests an optimal step size of $t=1$. For block sizes larger than 1, a minor adjustment of the step size might be required for optimal convergence, but we recommend starting with $t=1$.

\paragraph{Computational Complexity} For Algorithm \ref{alg:pthprox}, the overall computational complexity for an input tensor $\Z\in\R^{n_1\times\ldots\times n_m}$ is $O(n_1 n_2 \cdots n_m)$. For Algorithm \ref{alg:pth}, the complexity in each iteration is comprised of two main components: updating $\Z^{(k+1)}$ and updating $\X^{(k+1)}$. The update for $\Z^{(k+1)}$ is primarily determined by the high-order t-product operation. When FFT is selected as the transform $L$, the complexity is $O(n_2 \cdots n_m \log(n_3 \cdots n_m) + n_2 l n_3 \cdots n_m)$. Since $\X^{(k)}$ is of size $n_2\times l\times n_3\times\ldots\times n_m$, we can deduce from the complexity of the proximity operator that updating $\X^{(k+1)}$ will have a complexity of $O(n_2 l n_3 \cdots n_m)$. Similarly, the complexity of Algorithm \ref{alg:pthsing} in each iteration also consists of two parts. Updating $\Z^{(k+1)}$ has the same computational cost as that in Algorithm \ref{alg:pth}. Updating $\X^{(k+1)}$ requires $O(n_2 \cdots n_m \log(n_3 \cdots n_m) + n_2 l n_3 \cdots n_m + n_2 l^2 n_3 \ldots n_m)$. Note that for a general linear transform $L$ with transform matrices $\{U_{n_i}\}_{i=3}^m$, the $*_L$-based t-product introduces a higher computational complexity of $O(n_2(n_3 \cdots n_m)^2 + n_2 l n_3 \cdots n_m)$.

\section{Conclusions and Future Works}\label{sec:Con}
In this paper, we introduce novel regularized models designed to reconstruct tensors characterized by sparse or low-rank structures. These models leverage a generalized t-product for higher-order tensors, based on a general invertible linear transform. To solve these models, we develop regularized Kaczmarz algorithms that alternate between the Kaczmarz step and the proximity operator. We also discuss the convergence guarantees and the computational complexity of our algorithms. Furthermore, we enhance these algorithms by introducing a block variant and an accelerated version, aimed at improving computational efficiency for processing large datasets. Through extensive numerical experiments on both synthetic and real datasets, we demonstrate the superior performance of our algorithms compared to other state-of-the-art methods. In future work we will explore the use of non-integer powers $p$ for our algorithms.

\section*{Acknowledgements}
The research of Qin is supported by the NSF grant DMS-1941197. The research of Henneberger is supported partially by the NSF grant DMS-1941197 and DMS-2110731. In addition, the authors would like to thank Dr. Ding Lu from the University of Kentucky for the helpful discussion.

\appendix

\section{Proof of Theorem \ref{thm:cycconv}}\label{app:a}
\renewcommand{\thesection}{\Alph{section}}
\begin{lemma}
     The t-product as defined in Table ~\ref{tab:definitions} yields two important properties:
\begin{align}
    \langle \A,\B\rangle  &= \frac{1}{\rho}\langle \text{bdiag}(\A_L),\text{bdiag}(\B_L)\rangle \label{property1},\\
      \|\A\|^2_F&=\frac{1}{\rho}\|\text{bdiag}(\A_L)\|_F^2.\label{froprop}
\end{align}
 \end{lemma}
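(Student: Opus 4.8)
The plan is to obtain \eqref{froprop} as the special case $\B=\A$ of \eqref{property1}, so the entire task reduces to the inner-product identity \eqref{property1}. I would split that identity into two independent reductions: first, that passing from the transformed tensor $\A_L$ to its block-diagonal matrix $\text{bdiag}(\A_L)$ preserves the Frobenius inner product; and second, that the transform $L$ is an isometry up to the scalar $\rho$, i.e. $\langle\A_L,\B_L\rangle = \rho\langle\A,\B\rangle$. Chaining these two facts gives $\langle\A,\B\rangle=\tfrac1\rho\langle\A_L,\B_L\rangle=\tfrac1\rho\langle\text{bdiag}(\A_L),\text{bdiag}(\B_L)\rangle$.

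The first reduction is bookkeeping. By construction $\text{bdiag}(\A_L)$ carries the frontal slices $\A_L^1,\ldots,\A_L^J$ with $J=n_3\cdots n_m$ along its diagonal and zeros elsewhere, so the entrywise Frobenius pairing collects only the diagonal blocks, giving $\langle\text{bdiag}(\A_L),\text{bdiag}(\B_L)\rangle = \sum_{j=1}^J\langle\A_L^j,\B_L^j\rangle = \langle\A_L,\B_L\rangle$. No hypotheses beyond the definition of $\text{bdiag}$ are needed here.

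The second reduction is where the structural condition from Definition~\ref{def:tensors} enters, and it is the step I expect to demand the most care. I would reshape each tensor into the matrix $A$ of size $(n_1n_2)\times(n_3\cdots n_m)$ whose rows are indexed by $(i_1,i_2)$ and whose columns are indexed by the multi-index $(i_3,\ldots,i_m)$ using exactly the ordering $j=(i_m-1)n_3\cdots n_{m-1}+\cdots+(i_4-1)n_3+i_3$ from the $\text{bdiag}$ convention. With this ordering the chain of mode products $\A\times_3 U_{n_3}\times_4\cdots\times_m U_{n_m}$ defining $L$ becomes a single right multiplication $A_L = A W^{\top}$ by the Kronecker product $W=U_{n_m}\otimes U_{n_{m-1}}\otimes\cdots\otimes U_{n_3}$, with the slowest-varying index $i_m$ matching the outermost factor. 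The \emph{main obstacle} is aligning this ordering and transpose precisely so that $W$ is exactly the matrix appearing in the hypothesis; once that is pinned down the computation is short. Using $\langle AW^\top, BW^\top\rangle = \tr(\overline{W}A^*BW^\top) = \tr(A^*B\,W^\top\overline{W})$ by the cyclic property of the trace, and noting that the condition $W^*W=\rho I$ from Definition~\ref{def:tensors} transposes to $W^\top\overline{W}=\rho I$, I obtain $\langle\A_L,\B_L\rangle = \rho\,\tr(A^*B)=\rho\langle\A,\B\rangle$. Note that the hypothesis is stated for the full Kronecker product rather than the individual $U_{n_i}$, so I would work with the combined unfolding rather than peeling off one mode at a time.

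Combining the two reductions yields \eqref{property1}, and setting $\B=\A$ (with $\A$ real, so $\langle\A,\A\rangle=\|\A\|_F^2$) yields \eqref{froprop}. As a consistency check against the remarks following Definition~\ref{def:tensors}: for the Fourier transform each $U_{n_i}$ is an unnormalized DFT matrix and $\rho=n_3\cdots n_m$, whereas for a unitary transform such as the DCT one has $\rho=1$ and the identity collapses to exact preservation of the Frobenius inner product.
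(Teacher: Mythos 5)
Your proposal is correct, but note that the paper itself never proves this lemma: it is stated bare at the start of Appendix~A, with the identities effectively inherited from the conventions of \cite{qin2022low}, and is then used as a black box in the proof of the subsequent bound $\|\A*_L\X\|_F\leq\sqrt{\rho}\,\|\A\|_F\|\X\|_F$. So there is no paper proof to compare against; your argument supplies the missing derivation, and it is sound. The three ingredients all check out: (i) $\langle\text{bdiag}(\A_L),\text{bdiag}(\B_L)\rangle=\langle\A_L,\B_L\rangle$ is indeed pure bookkeeping, since the off-diagonal blocks are zero and the diagonal blocks enumerate exactly the frontal slices; (ii) with the column ordering $j=(i_m-1)n_3\cdots n_{m-1}+\cdots+(i_4-1)n_3+i_3$ (the same ordering the paper uses to define $\A^j$ and $\text{bdiag}$), the mode-product definition of $L$ does become $A_L=AW^{\top}$ with $W=U_{n_m}\otimes\cdots\otimes U_{n_3}$, and your identification of the hypothesis of Definition~\ref{def:tensors} with $W^*W=WW^*=\rho I$ uses the standard fact $(U_{n_m}\otimes\cdots\otimes U_{n_3})^*=U_{n_m}^*\otimes\cdots\otimes U_{n_3}^*$; (iii) the trace manipulation $\tr(\overline{W}A^*BW^{\top})=\tr(A^*B\,W^{\top}\overline{W})=\rho\tr(A^*B)$ is valid, with $W^{\top}\overline{W}=(W^*W)^{\top}=\rho I$. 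One point worth making explicit in a final write-up: since $\A_L$ is generally complex (e.g.\ under the FFT), the inner product and Frobenius norm on the right-hand sides must be the Hermitian ones, $\langle X,Y\rangle=\tr(X^*Y)$; your formulas already use this convention, and it is exactly what makes the $\B=\A$ specialization yield \eqref{froprop} with $\|\text{bdiag}(\A_L)\|_F^2$ real even though the entries are not.
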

\begin{lemma}
If $\A\in\R^{n_1\times n_2 \times n_3\times ...\times n_m}$ and $\X\in\R^{n_2\times l\times n_3\times ...\times n_m}$, then we have
 \begin{align}
     \|\A*_L\X\|_F\leq \sqrt{\rho}\|\A\|_F\|\X\|_F.\label{convlemma1}
 \end{align}
 \end{lemma}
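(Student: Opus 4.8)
The plan is to push the inequality through to the block-diagonal matrix representation, where the $L$-based t-product becomes ordinary matrix multiplication, and then invoke submultiplicativity of the Frobenius norm. First I would apply the norm identity \eqref{froprop} to the tensor $\A*_L\X$, writing $\|\A*_L\X\|_F^2=\frac{1}{\rho}\|\text{bdiag}((\A*_L\X)_L)\|_F^2$. The key structural observation is that the t-product linearizes under the map $\text{bdiag}\circ L$: since by definition $\A*_L\X=L^{-1}(\A_L\Delta\X_L)$, applying $L$ to both sides gives $(\A*_L\X)_L=\A_L\Delta\X_L$, and by the very definition of the facewise product $\Delta$ in Table~\ref{tab:definitions} we have $\text{bdiag}(\A_L\Delta\X_L)=\text{bdiag}(\A_L)\,\text{bdiag}(\X_L)$. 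Hence $\text{bdiag}((\A*_L\X)_L)$ is an ordinary product of the two matrices $\text{bdiag}(\A_L)$ and $\text{bdiag}(\X_L)$.

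Next I would bound the Frobenius norm of this matrix product using the standard submultiplicativity estimate $\|MN\|_F\leq\|M\|_F\|N\|_F$, which yields $\|\A*_L\X\|_F^2\leq\frac{1}{\rho}\|\text{bdiag}(\A_L)\|_F^2\,\|\text{bdiag}(\X_L)\|_F^2$. Finally I would invoke \eqref{froprop} in the reverse direction, namely $\|\text{bdiag}(\A_L)\|_F^2=\rho\|\A\|_F^2$ and likewise $\|\text{bdiag}(\X_L)\|_F^2=\rho\|\X\|_F^2$, so that the right-hand side collapses to $\frac{1}{\rho}\cdot\rho\|\A\|_F^2\cdot\rho\|\X\|_F^2=\rho\|\A\|_F^2\|\X\|_F^2$. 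Taking square roots gives exactly \eqref{convlemma1}.

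The only genuinely delicate point is the bookkeeping in the first step: confirming that the block-diagonal of the transformed t-product equals the product of the individual block-diagonals. This rests entirely on the definition of the facewise product $\Delta$ together with the fact that $L$ and $L^{-1}$ cancel, so once that identity is secured the remainder is a routine combination of matrix-norm submultiplicativity and the norm equivalence \eqref{froprop}. I would therefore expect the write-up to be short, with essentially all the content concentrated in the reduction to $\text{bdiag}(\A_L)\,\text{bdiag}(\X_L)$.
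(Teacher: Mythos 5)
Your proposal is correct and follows essentially the same route as the paper's proof: pass to the block-diagonal representation where the t-product becomes matrix multiplication, apply Frobenius-norm submultiplicativity, and convert back via the norm equivalence. If anything, your citation of \eqref{froprop} for the norm-equivalence steps is more accurate than the paper's own labeling, which points to \eqref{property1} in those lines.
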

 \begin{proof}
 By following the definition of the Frobenius norm for tensors, we have
\begin{align*}
    \|\A*_L\X\|_F^2 &= \|L^{-1}(\A_L\Delta\X_L)\|_F^2\\
    & = \frac{1}{\rho}\|\text{bdiag}(\A_L\Delta\X_L)\|_F^2 &\text{by \eqref{property1}}\\
    & = \frac{1}{\rho}\|\text{bdiag}(\A_L)\text{bdiag}(\X_L)\|_F^2\\
    & \leq \frac{1}{\rho}\|\text{bdiag}(\A_L)\|_F^2\|\text{bdiag}(\X_L)\|_F^2\\
    & =  \frac{1}{\rho}\left(\rho\|\A\|_F^2\right)\left(\rho\|\X\|_F^2\right) &\text{by \eqref{property1}}\\
    & = \rho \|\A\|_F^2\|\X\|_F^2.
\end{align*}
After taking the square root on both sides, we obtain the desired result. Note that this result is consistent with that for third-order tensors with Fourier transform.
\end{proof}

\begin{definition}\label{bregman}
For any convex function $f: \R^{n_1\times n_2\times ...\times n_m}\rightarrow \R$, the \textbf{Bregman distance} between $\X$ and $\Y$ with respect to $f$ and $\X^*\in \partial f(\X)$ is defined as
\[D_{f,\X^*}(\X,\Y) = f(\Y)-f(\X)-\langle \X^*,\Y-\X\rangle.\]
\end{definition}

\begin{definition} The \textbf{convex conjugate} of $f$ at $\Z$ is defined as
\[f^*(\Z) = \sup_{\X}\{\langle\Z,\X\rangle - f(\X)\}.\]
\end{definition}
Due to the duality property $\langle \X,\X^*\rangle = f(\X)+f^*(\X^*)$ when $\X^*\in \partial f(\X)$ \cite[Theorem 23.5]{rockafellar2015convex}, the Bregman distance can be written as
\[D_{f,\X^*}(\X,\Y) = f(\Y)+f^*(\X^*)-\langle \X^*,\Y\rangle.\] In general, if $f$ is $\alpha$-strongly convex, then the Bregman distance satisfies
\begin{align}\label{bregmanprop}
    \frac{{\alpha}}{2}\|\X-\Y\|_F^2\leq D_{f,\X^*}(\X,\Y).
\end{align}

\begin{proposition}\label{Prop1} Suppose $\A\in\R^{1\times n_2 \times n_3\times ...\times n_m}$, $\B\in\R^{1\times l\times n_3\times ...\times n_m}$ and $f$ is an ${\alpha}$-strongly convex function defined on $\R^{n_2\times l\times n_3\times ...\times n_m}$. Given arbitrary $\overline{\Z}\in \R^{n_2\times l\times n_3\times ...\times n_m}$ and $\overline{\X}=\nabla f^*(\overline{\Z})$, let $\Z = \overline{\Z}+t\A^**_L\frac{\B-(\A*_L\overline{\X})}{||\A(i(k))||_F^2}$  and $\X = \nabla f^*(\Z).$ Then we have

\[D_{f,\Z}(\X,\mathcal{H})\leq D_{f,\overline{\Z}}(\overline{\X},\mathcal{H}) - \frac{t}{\|\A\|_F^2}\left(1-\frac{t \rho}{2{\alpha}}\right)\|\B-\A*_L\overline{\X}\|_F^2\]
for any $\mathcal{H}$ that satisfies $\A*_L\mathcal{H}= \B$.
\end{proposition}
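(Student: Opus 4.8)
The plan is to express both Bregman distances in their conjugate (dual) form and then exploit the fact that $\alpha$-strong convexity of $f$ is equivalent to $\frac{1}{\alpha}$-smoothness of the conjugate $f^*$. Using the duality identity for the Bregman distance already recorded in the excerpt, I would first write
\[
D_{f,\Z}(\X,\mathcal{H}) = f(\mathcal{H}) + f^*(\Z) - \langle \Z, \mathcal{H}\rangle, \qquad D_{f,\overline{\Z}}(\overline{\X},\mathcal{H}) = f(\mathcal{H}) + f^*(\overline{\Z}) - \langle \overline{\Z}, \mathcal{H}\rangle,
\]
so that subtracting cancels the common $f(\mathcal{H})$ term and leaves
\[
D_{f,\Z}(\X,\mathcal{H}) - D_{f,\overline{\Z}}(\overline{\X},\mathcal{H}) = f^*(\Z) - f^*(\overline{\Z}) - \langle \Z - \overline{\Z}, \mathcal{H}\rangle.
\]

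Next I would bound $f^*(\Z) - f^*(\overline{\Z})$. Since $f$ is $\alpha$-strongly convex, $f^*$ is differentiable with $\frac{1}{\alpha}$-Lipschitz gradient and $\nabla f^*(\overline{\Z}) = \overline{\X}$, so the standard descent inequality gives
\[
f^*(\Z) - f^*(\overline{\Z}) \leq \langle \overline{\X}, \Z - \overline{\Z}\rangle + \frac{1}{2\alpha}\|\Z - \overline{\Z}\|_F^2.
\]
Substituting this into the difference above and regrouping yields the upper bound $\langle \overline{\X} - \mathcal{H}, \Z - \overline{\Z}\rangle + \frac{1}{2\alpha}\|\Z - \overline{\Z}\|_F^2$. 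I would then insert the Kaczmarz update $\Z - \overline{\Z} = \frac{t}{\|\A\|_F^2}\,\A^* *_L \mathcal{R}$, where $\mathcal{R} := \B - \A *_L \overline{\X}$ denotes the residual, and treat the linear and quadratic terms separately.

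For the linear term I would invoke the adjoint property of the $L$-based t-product, namely $\langle \overline{\X} - \mathcal{H}, \A^* *_L \mathcal{R}\rangle = \langle \A *_L (\overline{\X} - \mathcal{H}), \mathcal{R}\rangle$, together with the consistency assumption $\A *_L \mathcal{H} = \B$; since $\A *_L \overline{\X} - \B = -\mathcal{R}$, this collapses the cross term to $-\frac{t}{\|\A\|_F^2}\|\mathcal{R}\|_F^2$. For the quadratic term I would apply the norm inequality \eqref{convlemma1} in the form $\|\A^* *_L \mathcal{R}\|_F^2 \leq \rho \|\A\|_F^2 \|\mathcal{R}\|_F^2$ (using $\|\A^*\|_F = \|\A\|_F$), which bounds $\frac{1}{2\alpha}\|\Z - \overline{\Z}\|_F^2$ above by $\frac{t^2 \rho}{2\alpha \|\A\|_F^2}\|\mathcal{R}\|_F^2$. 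Adding the two contributions produces exactly $-\frac{t}{\|\A\|_F^2}\bigl(1 - \frac{t\rho}{2\alpha}\bigr)\|\mathcal{R}\|_F^2$, which is the claimed estimate after recalling $\mathcal{R} = \B - \A *_L \overline{\X}$.

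The main obstacle I anticipate is the careful bookkeeping around the adjoint identity for $*_L$ and the constant $\rho$: one must confirm that $\A^*$ is genuinely the adjoint of $\A$ with respect to the Frobenius inner product under $*_L$, which can be checked through \eqref{property1} and the block-diagonal representation, and then verify that the factor $\rho$ introduced by \eqref{convlemma1} cancels correctly against the $\|\A\|_F^2$ terms so that the final coefficient matches $\frac{t}{\|\A\|_F^2}\bigl(1-\frac{t\rho}{2\alpha}\bigr)$. Everything else reduces to a routine application of Fenchel duality and the strong-convexity/smoothness correspondence for conjugate functions.
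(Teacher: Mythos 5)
Your proposal is correct and follows essentially the same route as the paper: the paper's proof computes exactly your two key ingredients --- the adjoint/consistency identity $\langle \A^**_L(\B-\A*_L\overline{\X}),\mathcal{H}-\overline{\X}\rangle = \|\B-\A*_L\overline{\X}\|_F^2$ and the $\sqrt{\rho}$-norm bound from \eqref{convlemma1} --- and then defers the remaining bookkeeping to the cited reference \cite{chen2021regularized}, which is precisely the Fenchel-duality and $\tfrac{1}{\alpha}$-smoothness-of-$f^*$ argument you spell out. Your write-up is simply a self-contained version of the same proof, with the deferred steps made explicit.
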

\begin{proof}
    Let $\W = \A^**_L(\B-\A*_L\overline{\X})$ and $s = \frac{t}{\|\A\|_F^2}$. Then $\Z = \overline{Z}+s\W$ and by \eqref{convlemma1} we have
    \[\|\W\|_F = \|\A^**_L(\B-\A*_L\overline{\X})\|_F = \sqrt{\rho}\|\A\|_F\|\B-A*_L\overline{\X}\|_F.\]
By \eqref{property1} we have
\begin{align*}
    \langle \W,\mathcal{H}-\overline{\X}\rangle& = \langle  \A^**_L(\B-\A*_L\overline{\X}),\mathcal{H}-\overline{\X}\rangle\\
    &=  \langle  \A^**_L(\A*_L\mathcal{H}-\A*_L\overline{\X}),\mathcal{H}-\overline{\X}\rangle\\
    &=\langle  \A*_L\mathcal{H}-\A*_L\overline{\X}, \A*_L(\mathcal{H}-\overline{\X})\rangle\\
    &=\|\A*_L\mathcal{H}-\A*_L\overline{\X}\|_F^2\\
    &=\|\B-\A*_L\overline{\X}\|_F^2
\end{align*}
Then using these results the proof follows as in \cite{chen2021regularized}.
\end{proof}
Consider $f(\X)$ as defined in \eqref{eqn:feqn}. Since $f$ is 1-strongly convex, the proof of \eqref{thm:cycconv} follows directly from the proposition above when we take
    \[
        \A(i(k)) = \A,\quad
        \B(i(k)) = \B,\quad
        \Z^{(k)} = \overline{Z},\quad
        \Z^{(k+1)} = \Z.
    \] The proof of the convergence of $\{\X^{(k)}\}$ follows similarly as the proof in \cite[Theorem 3.3]{chen2021regularized}.

\section{Proof of Theorem \ref{thm:randconv}}\label{app:b}

\begin{definition}  A function $f:\R^{n_1\times n_2\times ...\times n_m}\rightarrow \R$ is \textbf{admissible} if the dual function $g_f$, as defined in \cite[Section 3.2]{chen2021regularized} is restricted strongly convex on any of $g_f$'s levels sets. Function $f$ is \textit{strongly admissible} if $g_f$ is restricted strongly convex on $\R^{n_1\times k\times n_3\times ...\times n_m}$.
\end{definition}

\begin{lemma}\label{lemma:nu}
 Assume $f$ is admissible, and let $\hat{\X}$ be the solution to \eqref{pth}. For $\Tilde{\X}$ and $\Tilde{\Z}\in \partial f(\Tilde{\X})\cap R(\A)$, there exists $\nu>0$ such that for all $\X$ and $\Z\in \partial f(\X)\cap R(\A)$ with $D_{f,\Z}(\X,\hat{\X})\leq D_{f,\Tilde{\Z}}(\Tilde{\X},\hat{\X})$, it holds that
\begin{align}\label{nulemma}
    D_{f,\Z}(\X,\hat{\X})\leq \frac{1}{\nu}\|\A*_L(\X-\hat{\X})\|_F^2.
\end{align}
\end{lemma}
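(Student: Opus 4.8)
The plan is to pass to the dual and recognize the claimed estimate as a Polyak--{\L}ojasiewicz--type (gradient domination) inequality for the dual function $g_f$. First I would use the hypothesis $\Z\in\partial f(\X)\cap R(\A)$ to write $\Z=\A^**_L\Y$ for some $\Y\in\R^{n_1\times l\times n_3\times\ldots\times n_m}$, and likewise fix $\hat\Z=\A^**_L\hat\Y\in\partial f(\hat\X)\cap R(\A)$ at the solution. Recalling the conjugate form of the Bregman distance $D_{f,\Z}(\X,\hat\X)=f(\hat\X)+f^*(\Z)-\langle\Z,\hat\X\rangle$ together with the adjoint identity $\langle\A^**_L\Y,\hat\X\rangle=\langle\Y,\A*_L\hat\X\rangle=\langle\Y,\B\rangle$ (valid since $\A*_L\hat\X=\B$), I would rewrite
\[
D_{f,\Z}(\X,\hat\X)=f(\hat\X)+f^*(\A^**_L\Y)-\langle\B,\Y\rangle=f(\hat\X)+g_f(\Y)=g_f(\Y)-g_f(\hat\Y),
\]
where $g_f(\Y)=f^*(\A^**_L\Y)-\langle\B,\Y\rangle$ is the dual function of \cite[Section 3.2]{chen2021regularized} and $\hat\Y$ is its minimizer; here $g_f(\hat\Y)=-f(\hat\X)$ follows from the duality relation $\langle\hat\X,\hat\Z\rangle=f(\hat\X)+f^*(\hat\Z)$ and $\langle\hat\X,\hat\Z\rangle=\langle\B,\hat\Y\rangle$. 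Thus the left-hand side of \eqref{nulemma} is exactly the dual objective gap.

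Second, I would identify the right-hand side residual with the squared dual gradient. Since $\X=\nabla f^*(\Z)=\nabla f^*(\A^**_L\Y)$ and the adjoint of $\Y\mapsto\A^**_L\Y$ is $\W\mapsto\A*_L\W$, the chain rule gives $\nabla g_f(\Y)=\A*_L\nabla f^*(\A^**_L\Y)-\B=\A*_L\X-\B=\A*_L(\X-\hat\X)$, using $\B=\A*_L\hat\X$ once more. Hence
\[
\norm{\A*_L(\X-\hat\X)}_F^2=\norm{\nabla g_f(\Y)}_F^2=\norm{\nabla g_f(\Y)-\nabla g_f(\hat\Y)}_F^2,
\]
so that \eqref{nulemma} is equivalent to the gradient-domination bound $g_f(\Y)-g_f(\hat\Y)\le\frac1\nu\norm{\nabla g_f(\Y)}_F^2$.

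Third, I would invoke admissibility to supply the constant $\nu$. By the first step, the comparison hypothesis $D_{f,\Z}(\X,\hat\X)\le D_{f,\Tilde{\Z}}(\Tilde{\X},\hat\X)$ translates into $g_f(\Y)\le g_f(\Tilde{\Y})$, confining $\Y$ to the fixed sublevel set $\{\,g_f\le g_f(\Tilde{\Y})\,\}$. The admissibility of $f$ means precisely that $g_f$ is restricted strongly convex on each of its level sets, so on this one sublevel set there is a single $\nu>0$ for which the restricted strong convexity estimate holds uniformly. Because $g_f$ is convex, restricted strong convexity (quadratic growth about the minimizer $\hat\Y$) yields the Polyak--{\L}ojasiewicz inequality with the same constant; combining it with the two identities above gives exactly \eqref{nulemma}.

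The main obstacle I anticipate is the third step: making the passage from \emph{restricted strong convexity of $g_f$ on a level set} to the uniform gradient-domination inequality rigorous, and in particular ensuring that the confinement forced by the comparison hypothesis produces one constant $\nu$ valid for \emph{all} admissible pairs $(\X,\Z)$ rather than a pointwise-varying one. The first two steps are essentially bookkeeping with the conjugate and Bregman identities together with the adjoint property of $*_L$, which follow from the duality relation and the $*_L$-properties established earlier in the paper.
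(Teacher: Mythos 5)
Your argument is correct and is essentially the proof the paper intends: the paper states this lemma without proof, relying on the dual-function framework of \cite{chen2021regularized}, and your three steps---identifying $D_{f,\Z}(\X,\hat{\X})$ with the dual gap $g_f(\Y)-g_f(\hat{\Y})$ via the conjugate form of the Bregman distance and the adjoint identity, identifying $\A*_L(\X-\hat{\X})$ with $\nabla g_f(\Y)$, and invoking restricted strong convexity of $g_f$ on the sublevel set $\{g_f\le g_f(\Tilde{\Y})\}$ singled out by the comparison hypothesis---are exactly that argument carried over to the $*_L$-product setting. The obstacle you flag at the end is not a genuine gap: admissibility supplies a single constant $\nu$ valid on the entire sublevel set determined by $(\Tilde{\X},\Tilde{\Z})$, which is precisely the uniformity the lemma asserts, and converting restricted strong convexity into the gradient-domination bound costs at most a fixed positive factor in $\nu$, which is harmless since the lemma only claims existence of some $\nu>0$.
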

\section{Proof of Theorem \ref{thm:blockconv}}\label{app:c}

 The key part of proving this theorem lies in the following bound where $M$ is the number of blocks.
    From Theorem \eqref{thm:cycconv} we have
    \begin{align*}f(\X^{(k)})-f(\X^{(k+1)})&\leq \langle \Z^{(k+1)},\X-\X^{(k+1)}\rangle -\langle \Z^{(k)},\X-\X^{(k)}\rangle \ \\
&- t\left(1-\frac{t\rho}{2}\right)\frac{\| A(\tau_k)*_L(\X^{(k)}-\X)\|_F^2}{\|\A(\tau_k)\|_F^2}
\end{align*}
We may rearrange and express this equivelently as
 \begin{align*}D_{f,\Z^{(k+1)}}(\X^{(k+1)},\hat{\X})&\leq D_{f,\Z^{(k)}}(\X^{(k)},\hat{\X})- t\left(1-\frac{t\rho}{2}\right)\frac{\| A(\tau_k)*_L(\X^{(k)}-\X)\|_F^2}{\|\A(\tau_k)\|_F^2}
\end{align*}
Now we want to analyze the expectation of $\frac{\| \A(\tau_k)*_L(\X^{(k)}-\X)\|_F^2}{\|\A(\tau_k)\|_F^2}$, where $\A(\tau_k) = \A(\tau_k,:,:,...,:)$. Then let $\mathbb{E}_c$ be the expectation conditioned on $\tau_0,...\tau_{k-1}$.

By the definition of expectation and Lemma~\ref{lemma:nu}, we have
\begin{align*}
   \mathbb{E}_c\left[\frac{\| \A(\tau_k)*_L(\X^{(k)}-\X)\|_F^2}{\|\A(\tau_k)\|_F^2}\right] &= \frac{1}{M}\sum_{j\in \tau_k}\frac{\| \A(j)*_L(\X^{(k)}-\X)\|_F^2}{\|\A(j)\|_F^2} \\
   & = \frac{1}{M}\frac{\| \A*_L(\X^{(k)}-\X)\|_F^2}{\|\A\|_F^2}\\
   & \geq  \frac{1}{M} \frac{\nu}{\|\A\|_F^2}D_{f,\Z^{(k)}}(\X^{(k)},\hat{\X}).
\end{align*} The last inequality follows from \eqref{nulemma}. Then taking the expectation leads to
\begin{align*}
    &\mathbb{E}\left[D_{f,\Z^{(k+1)}}(\X^{(k+1)},\hat{\X})\right]\\
    & \leq
    \mathbb{E}\left[D_{f,\Z^{(k)}}(\X^{(k)},\hat{\X})-t\left(1-\frac{t \rho }{2}\right)\frac{\| \A(\tau_k)*_L(\X^{(k)}-\X)\|_F^2}{\|\A(\tau_k)\|_F^2}\right]\\
    & = \mathbb{E}_{\tau_0,...,\tau_{k-1}}\mathbb{E}_c\left[D_{f,\Z^{(k)}}(\X^{(k)},\hat{\X})-t\left(1-\frac{t \rho }{2}\right)\frac{\| \A(\tau_k)*_L(\X^{(k)}-\X)\|_F^2}{\|\A(\tau_k)\|_F^2}\right]\\
    &\leq \mathbb{E}_{\tau_0,...,\tau_{k-1}}\left[D_{f,\Z^{(k)}}(\X^{(k)},\hat{\X})-t\left(1-\frac{t \rho }{2}\right)\frac{1}{M} \frac{\nu}{\|\A\|_F^2}D_{f,\Z^{(k)}}(\X^{(k)},\hat{\X})\right]\\
    &\leq \left(1-\frac{\nu t}{M\|\A\|_F^2}\left(1-\frac{t \rho }{2}\right)\right)\mathbb{E}\left[D_{f,\Z^{(k)}}(\X^{(k)},\hat{\X})\right].
\end{align*}
After applying the above inequality repeatedly, we can bound the expected Bregman distance after $k$ iterations in terms of the initial Bregman distance.
\[
\mathbb{E}\left[D_{f,\Z^{(k)}}(\X^{(k)},\hat{\X})\right]\leq \left(1-\frac{\nu t}{M\|\A\|_F^2}\left(1-\frac{t \rho }{2}\right)\right)^k\left[D_{f,\Z^{(0)}}(\X^{(0)},\hat{\X})\right].
\]
Then by \eqref{bregmanprop}, we have
\begin{align*}
    &\mathbb{E}\|\X^{(k)}-\hat{\X}\|_F^2\leq 2\mathbb{E}\left[D_{f,\Z^{(k)}}(\X^{(k)},\hat{\X})\right]\\
    &\leq 2 D_{f,\Z^{(0)}}(\X^{(0)},\hat{\X})\left(1-\frac{\nu t}{M\|\A\|_F^2}\left(1-\frac{t \rho }{2}\right)\right)^k\\
    &=2\left[f(\hat{\X})-f(\X^{(0)})-\langle \Z^{(0)},\hat{\X}-\X^{(0)}\rangle \right]\left(1-\frac{\nu t}{M\|\A\|_F^2}\left(1-\frac{t \rho }{2}\right)\right)^k.
\end{align*}
Since $f(\X)$, as defined in \eqref{eqn:feqn}, is $1$-strongly convex, by Definition \ref{def:stconv} we have
\begin{align*}
    \mathbb{E}\|\X^{(k)}-\hat{\X}\|_F^2
    &\leq  \|\hat{\X}-\X^{(0)}\|_F^2\left(1-\frac{\nu t}{M\|\A\|_F^2}(1-\frac{t \rho}{2})\right)^k.
\end{align*}
Note that this result relies on $M$, the number of blocks.

\bibliographystyle{unsrt}
\bibliography{references}
\end{document}